\newcommand{\eps}{\varepsilon}
\newcommand{\R}{\mathbb{R}}
\newcommand{\N}{\mathbb{N}}
\newcommand{\mV}{\mathcal{V}}
\newcommand{\mQ}{\mathcal{Q}}
\newcommand{\mC}{\mathcal{C}}
\newcommand{\dd}{\,\mathrm{d}}
\DeclareMathOperator{\sgn}{sgn}
\DeclareMathOperator{\essinf}{ess-\inf}
\crefname{hypothesis}{Hypothesis}{Hypotheses}
\crefname{assumption}{Assumption}{Assumptions}
\crefname{fact}{Fact}{Facts}
\title{An Example Article\thanks{Submitted to the editors DATE.
\funding{This work was funded by the Fog Research Institute under contract no.~FRI-454.}}}
\author{Alexander Keimer\thanks{Institute of Mathematics,
University of Rostock, 18055 Rostock, Germany
  (\email{alexander.keimer@uni-rostock.de})}
\and Lukas Pflug\thanks{Department of Mathematics, Friedrich-Alexander-University Erlangen-Nürnberg, 91058 Erlangen, Germany (\email{lukas.pflug@fau.de})}
\and Jakob Rodestock\thanks{Department of Mathematics, Friedrich-Alexander-University Erlangen-Nürnberg, 91058 Erlangen, Germany (\email{jakob.rodestock@fau.de})}}
\begin{document}

\title{Optimal Control Problems with Nonlocal Conservation Laws: Existence of Optimizers and Singular Limits in Approximations of Local Conservation Laws}

%%\pacs[MSC Classification]{35A01, 65L10, 65L12, 65L20, 65L70}
\maketitle
\renewcommand*\descriptionlabel[1]{\hspace\labelsep\textit{#1}}
\renewcommand{\textbf}[1]{\textit{#1}}
\begin{abstract}
This contribution considers optimal control problems subject to nonlocal conservation laws -- those in which the velocity depends nonlocally (i.e., via a convolution) on the solution -- and the so-called singular limit. First, the existence of minimizers is demonstrated for a broad class of optimal control problems, involving optimization over the initial datum, velocity, and nonlocal kernel for classical tracking-type \(L^{2}\) cost functionals.
Then, it is proven that the obtained minimizers converge to minimizers of the corresponding local optimal control problem when the kernel function of the convolution is of exponential type and approaches a Dirac distribution. Finally, some numerical results are presented.
\end{abstract}

% REQUIRED
\begin{keywords}
nonlocal conservation laws, optimal control, singular limit, optimal control of conservation laws
\end{keywords}

% REQUIRED
\begin{MSCcodes}
49J20, 49J22, 35L65
\end{MSCcodes}

\tableofcontents
\listoffigures

\section{Introduction}
In the last ten years, the theory of the nonlocal scalar conservation law
\begin{equation}\label{eq:con}
\partial_t q_\eta(t, x) + \partial_x\bigg(V\bigg( \tfrac{1}{\eta}\int^\infty_x \!\! \gamma\left(\tfrac{y-x}{\eta}\right) q_\eta(t, y) \dd y\bigg)q_\eta(t, x)\bigg) = 0 \text{ in }(0, T) \times \R,
\end{equation}
for $\eta > 0$, has been studied extensively~\cite{keimernonlocalbalance2017,Keimer2018,Chiarello2018,Amorim2015,Chiarello2018-yp,CocliteBV2022} (a nonexhaustive list). Existence and uniqueness without an entropy condition for~\cref{eq:con} have been established, along with a (semi-)explicit solution formula making use of the method of characteristics~\cite{keimernonlocalbalance2017,Keimer2018}. Moreover, it is known that, if the initial datum is continuous, the solution remains continuous~\cite{keimernonlocalbalance2017}. This is a pleasant regularity result, that does not hold for ``standard'' hyperbolic conservation laws~\cite[Section 3.4]{evanspartial}.\\
Applications of this and related equations have also been considered, ranging from nanotechnology~\cite{Pflug2020-mb,Spinola2020-nd} to traffic~\cite{Chiarello2024-od}. In these works, $V'\leq 0$ is generally assumed~\cite{Greenberg1959,Richards1956}. The latter is an assumption that we make in this work as well.
However, controllability and optimal control of nonlocal conservation laws have only been studied sparsely for some special cases~\cite{Grschel2014,Colombo2010,Spinola2020-nd,spinolaphdthesis}, including boundary controllability~\cite{Bayen2021}.

Recently, advances have been made~\cite{herty2025} for a setup rather close to the problem class we consider in this article, including a well-posedness result.
%: Existence for the optimal control problem in the initial datum and singular limit convergence of the corresponding optimizers. Also, numerical studies of the phenomenon have been conducted.\\
In the present work, we want to extend said result to the optimal control problem
\begin{align}
\inf_{\substack{q_0 \in \mQ \\ V \in \mV \\ \gamma \in \Gamma}} & \displaystyle J\big(q[q_0, V], W[q[q_0, V, \gamma]], q_0, V, \gamma\big)  \label{eq:op1}\\
    \text{ s.t. $q[q_0, V, \gamma]$}&\text{ is the unique solution to~\cref{eq:con}} \label{eq:op2}\\
    q(0, \cdot) &\equiv q_0,  &&\text{ on } \R  \label{eq:op3},
\end{align}
for a wide class of objective functions $J$ with fewer assumptions on the admissible set than those in \cite{herty2025} (we do not require \(TV\) bounds on the initial data), optimizing in $q_0$, $\gamma$, and $V$ all at once rather than in $q_0$ only. That is, we are interested in the optimal control problem with controls also in the ``coefficients'' of the PDE. We find, for example, that for existence of an optimal control, it is enough for $q_0$ to be uniformly essentially bounded and in a weak\(*\) compact subset of $L^2(\R)$. Additionally, we provide numerical results for the optimization in $q_0$ only, pursuing a \emph{discretize-then-differentiate approach}~\cite[section 4]{Giles2003} to obtain a derivative.

These optimal control problems are interesting both because there are few results on them and because the nonlocal conservation law is connected to the well-known local conservation law ($T>0$)
\begin{align}
\partial_t q + \partial_xf(q) &= 0,&& \text{on } (0, T) \times \R, \label{eq:oco}
\intertext{ supplemented by an initial condition}
q(0,\cdot)&\equiv q_{0}&& \text{on } \R
\end{align}
for $q_0 \in L^1_\text{loc}(\mathbb{R})$. Well-posedness, that is, existence and uniqueness of the entropy solution~\cite{Kruzkov1970,Bressan2000,godlewski1991}, is a widely celebrated result. The relation between the local equation~\cref{eq:oco} and~\cref{eq:con} is described by the \emph{singular limit problem}, which was first raised in~\cite{Amorim2015}. This is the question whether, for fixed $q_0 \in L^\infty(\R, \R_{\geq 0})$, one can ensure that the limit
\begin{equation}\label{eq:sl}
\lim_{\eta \rightarrow 0 } q_{\eta}[q_0] = q[q_0] \text{ in }L^1_{\text{loc}}\big((0, T) \times \R\big),
\end{equation}
where $q[q_0]$ is the weak entropy solution of the ``local'' conservation law
\begin{align}
    \partial_t q(t, x) + \partial_x\big(q(t, x)V(q(t, x))\big) &= 0,&& \text{on } (0, T) \times \R,  \label{eq:ocov} \\
q(0,\cdot)&\equiv q_{0}&& \text{on } \R \notag
\end{align}
with initial datum $q_0$ and $q_\eta[q_0]$, solves the nonlocal equation~\cref{eq:con} with initial datum $q_0$.

For~\cref{eq:sl} to hold, we must either make assumptions on the derivatives of $V$, if we are working with an Ole\u{\i}nik-type convergence result from nonlocal to local \cite{keimeroleinik,Crippa2021}, or impose total variation bounds on $q_0$ \cite{CocliteSL2022,Keimer2025,Colombo2023,colombo2023overview,Colombo2019,Keimer2023disc,Friedrich2024,keimersign2023}. However, as the Ole\u{\i}nik compactness and convergence result in the singular limit is quite restrictive and imposes further assumptions on the initial datum and derivatives of \(V\), we will only consider the singular limit convergence in the case of \(TV\) bounds.

The naturally ensuing question is whether the optimal control problem of~\cref{eq:con} is indeed an approximation of
\begin{align}
\inf_{\substack{q_0 \in \mQ \\ V \in \mV}} & \displaystyle J\big(q[q_0, V], q[q_0, V], q_0, V, \gamma\big)  \label{eq:opp1}\\
    \text{ s.t. $q[q_0, V]$}&\text{ is the unique entropy solution to~\cref{eq:ocov}} \label{eq:opp2}\\
    q(0, \cdot) &\equiv q_0,  &&\text{ on } \R  \label{eq:opp3},
\end{align}
in the sense that if $q_{0, \eta}^\ast$ (or $V_\eta^\ast$) is an optimizer of the optimal control problem
\begin{align}
\inf_{\substack{q_{0, \eta} \in \mQ \\ V \in \mV}} & \displaystyle J\big(q_\eta[q_{0, \eta}, V], W[q_{0, \eta}, V], q_{0, \eta}, V, \gamma\big)  \label{eq:o1}\\
    \text{ s.t. $q_\eta[q_{0, \eta}, V]$}&\text{ is the unique solution to~\cref{eq:con}} \label{eq:o2}\\
    q_\eta(0, \cdot) &\equiv q_{0, \eta},  &&\text{ on } \R  \label{eq:o3},
\end{align}
then every accumulation point of the sequence $\big(q_{0, \eta}^\ast \big)_{\eta >0}$ (or $\big(V_{\eta}^\ast \big)_{\eta >0}$) is an optimizer of~\cref{eq:opp1,eq:opp2,eq:opp3}. The answer to this is largely positive, as shown in~\cite{herty2025}, and an extension of that result to optimization in $q_0$ and $V$ simultaneously, in a broader, less regular setting, can be found in \cref{sec:singular_limit} of this paper. We will also be able to observe this convergence numerically.
This yields a means of approximating~\cref{eq:opp1,eq:opp2,eq:opp3}, which is itself a well-known problem.
Numerical methods, sensitivity analysis, optimality conditions (even for state constrained optimal control problems), and applications of optimal control problems have also been extensively studied~\cite{Ulbrich2003,Schmitt2021,LeVeque1992,Liard2022,Ulbrich2002,herty2025,Ulbrich1999, Adimurthi2014,Hajian2017,2Adimurthi2014,Li2023}, including flux identification~\cite{Castro2011,James1999} and special cases like Burgers' equation~\cite{Castro2009,James1999,CASTRO2008}.

\section{Well-posedness of optimal control with respect to \texorpdfstring{$V$, $\gamma$, $q_0$}{V, γ, q0} in a broad setting}
In this section, we study the existence of the corresponding nonlocal optimal control problem, which we formulate below.

\begin{definition}[the optimal control problem considered]\label{defi:optimal_control_problem}

For $T \in \R_{>0}$, we consider the problem
\begin{equation}\label{Eq:O}
\begin{aligned}
    \inf_{\substack{q_0 \in \mQ \\ V \in \mV \\ \gamma \in \Gamma}} & \displaystyle J\big(q[q_0, V], W[q[q_0, V, \gamma]], q_0, V, \gamma\big)\\
    \text{ s.t. $q[q_0, V, \gamma]$}&\text{ is the weak solution to}\\
    \partial_t q + \partial_x \left(q V\big(W[q]\big) \right) &= 0, &&\text{ on } (0,T)\times\R,\\
    q(0, \cdot) &\equiv q_0,  &&\text{ on } \R,
\end{aligned}
\end{equation}
where
\begin{equation}
W[q](t, x) \coloneqq \int_x^\infty \gamma\left(y-x \right)q(t, y)\dd y \quad \forall (t, x) \in (0, T) \times \R,\label{eq:nonlocal_operator}
\end{equation}
 and $\mQ \subseteq L^{\infty}(\R)$, \(q_{\max}\coloneqq \|q_{0}\|_{L^{\infty}(\R)}\),\ $\mV \subseteq C\big([0, q_{\max}], \R_{\geq 0}\big)$, and $\Gamma \subseteq L^1\big(\R_{\geq 0}, \R_{\geq 0}\big)$ are properly chosen admissible control sets. The objective functional $J$ will be specified later.
\end{definition}
For such a problem to be well-posed, we require further assumptions on the objective function \(J\) as well as the sets \(\Gamma,\mQ,\) and \(\mV\).
\begin{assumption}[\(\mQ,\mV,\Gamma\), and \(J\)]\label{ass:optimal_control}
    We require that there exist nonnegative constants \(\mC_{\Gamma},\mC_{\mV},\ \bar{R}, q_{\max},\ \eps\in\R_{>0}\) such that
    \begin{enumerate}[leftmargin=15pt,label=(\alph*)]
    \item \(\mV \coloneqq \Big \lbrace V \in W^{1, \infty}\big([0, q_{\max}]; \R_{\geq 0}\big): V'\leqq 0,\ \lVert V \rVert_{W^{1, \infty}([0, q_{\max}])} \leq \mC_{\mV} \Big \rbrace\),
    
    \item \(\Gamma \coloneqq  \Big \lbrace \gamma \in L^1(\R_{\geq 0}, \R_{\geq 0}): \gamma\text{ is nonincreasing a.e., }\lVert \gamma \rVert_{L^\infty(\R_{\geq 0})} \leq \mC_{\Gamma}, \\
     \phantom{\qquad\qquad}\|\gamma\|_{L^{1}(\R_{\geq0})}=1,\ \gamma(x) \leq x^{-(1+\eps)} \text{ for a.e. }x \in \R_{\bar R>0} \Big \rbrace \), and
    \item at least one of the following three assumptions holds:
    \begin{enumerate}[leftmargin=20pt,label=(\arabic*)]
        \item \textbf{General requirements of existence of optimizers:}\\ \label{item:1}
 Let \(X\coloneqq C\big([0,T]; L^2(\R)\big) \times C\big([0,T]; H^1(\R)\big) \times L^2(\R) \times C\big([0, q_{\max}], \R\big) \times L^1(\R).\)
       Then we assume that if we have a sequence 
        \[\big(q_k, w_k, q_{0, k}, V_{k}, \gamma_k\big)_{k \in \N} \subseteq X,\ \text{and } (q^*, w^*, q_0^*, V^*, \gamma)\in X\] such that
        \begin{itemize}
        \item $q_k \rightarrow q^*$ uniformly in first and $L^2(\R)$-weakly in second component
        \item $w_k \rightarrow w^*$ uniformly in first and $H^1(\R)$-weakly in second component
        \item $q_{0, k} \rightarrow q_0^*$ in $L^2(\R)$-weakly
        \item $V_k \rightarrow V^*$ strongly in $C\big([0, q_{\max}], \R_{\geq 0} \big)$
        \item $\gamma_k \rightarrow \gamma^*$ strongly in $L^1(\R)$
        \end{itemize}
        we have
        \begin{equation}\label{eq:lsem}
        \liminf_{k \rightarrow \infty} J(q_k, w_k, q_{0, k}, V_k, \gamma_k) \geq J(q^*, w^*, q_{0}^*, V^*, \gamma^*).
        \end{equation}
        Furthermore, \(J\) satisfies, for a given $s \in [0, T]$ and $K\in\R_{>0}$,
        \begin{equation}\label{eq:coer}
            J(q, w, q_0, V, \gamma) \geq K\| q(s, \cdot) \|_{L^2(\R)}-K\ \forall (q, w, q_0, V, \gamma) \in X.
        \end{equation}
        Moreover,
        \[
        \mQ \coloneqq \left \lbrace q_0 \in L^2(\R): 0 \leq q_0 \leq q_{\max} \text{ a.e.} \right \rbrace.
        \]
        \item \textbf{Strong convergence of the sequence of minimizers with respect to the nonlocal reach \(\eta\in\R_{>0}\) and singular limit for bounded \(\Omega\):}\\ There is a $\mC_{\mQ} \in \R_{> 0}$ such that
        \[
        \mQ \coloneqq \left \lbrace q_0 \in L^1(\R): 0 \leq q_0 \leq q_{\max} \text{ a.e.},~ |q_0|_{\mathrm{TV}(\R)} \leq \mC_{\mQ} \right \rbrace,
        \]
        $J:C\big([0, T]; L^1_{\text{loc}}(\R) \big) \times C\big([0, T]\times \R\big) \times L^1_{\text{loc}}(\R)\times C([0, q_{\max}], \R_{\geq 0}) \times L^1(\mathbb{R}) \rightarrow \R$ is continuous, and  $J$ \emph{does not} depend on the first component (i.e., $q$). 
        \label{item:2}
        \item \textbf{Strong convergence of the sequence of minimizers with respect to the nonlocal reach \(\eta\in\R_{>0}\) for unbounded \(\Omega\):}\\ There is a $\mC_{\mQ} \in \R_{>0}$ such that the admissible set $\mQ$ for the initial datum is \emph{a compact subset} of 
        \[
        \left \lbrace q_0 \in L^1(\R): 0 \leq q_0 \leq q_{\max} \text{ a.e.},~ |q_0|_{\mathrm{TV}(\R)} \leq \mC_{\mQ} \right \rbrace
        \]
and we define
\[
\sup_{q_{0}\in\mQ}\|q_{0}\|_{L^{1}(\R)}\eqqcolon\mC_{\mQ_{1,0}}\in\R_{\geq0}.
\]

       Moreover, we consider
        \[
        \mV \coloneqq \Big \lbrace V \in W^{2, \infty}\big([0, q_{\max}]; \R_{\geq 0}\big): V'\leqq 0,\ \lVert V \rVert_{W^{2, \infty}([0, q_{\max}])} \leq \mC_{\mV} \Big \rbrace
        \]
                rather than the $\mathcal{V}$ at the beginning of~\cref{ass:optimal_control} and postulate that
        $J:C\big([0, T]; L^1(\R)\big) \times C\big([0,T]; W^{1, 1}(\R)\big) \times L^1(\R)\times C([0, q_{\max}], \R_{\geq 0}) \times L^1(\mathbb{R}) \rightarrow \R$ is continuous in the induced topology.
        \label{item:3}
    \end{enumerate} 
\end{enumerate}
\end{assumption}

We now add further clarification and interpretation for these assumptions.
\begin{remark}[properties of admissible sets]\label{Rem:sol}
    \begin{itemize}[leftmargin=15pt]
\item The set \(\mQ\) ensures that the dynamics obey a maximum principle, which is the reason for imposing an \(L^{\infty}\) bound in all three cases. Such a bound will then be satisfied by the PDE for all times, which will become clear when presenting \cref{thm:eub}. 
\item The sets \(\mV\) and \(\Gamma\) are chosen so that they automatically give compactness in the space of continuous functions for the velocities and in \(L^{1}(\R)\) for the considered kernel class. Thereby, the growth condition in \(\Gamma\) ``close'' to infinity is essential, as the \(TV\) bound on \(\Gamma\), which is due to the monotonicity assumption, only yields compactness in \(L^{1}_{\text{loc}}\). 

\item The bound on the \(W^{1,\infty}\) norm of the admissible velocities might seem strong, but considering that it is a control in the ``coefficients'' of the PDE and that we need to have such a bound also in the limit to obtain the well-posedness of the initial value problem, it is a somewhat limited requirement.

\item The monotonicity of the kernel is another requirement for having the conservation law be well-posed on any finite time horizon, and the sign on the derivative of the velocity function is crucial, as it is also essential for the existence and uniqueness results with a maximum principle in \cref{thm:eub}. 

\item 
Concerning the additional requirements in \crefrange{item:1}{item:3} and particularly the assumptions on the objective function, we remark that in \cref{item:1}, the optimization with respect to the initial datum \(q_{0}\) is via additional \(L^{2}\) bounds, which are guaranteed by the coercitivity of the objective function in \cref{eq:coer}. The choice of regularity in the objective with respect to the second component, the nonlocal term, might be surprising, but as is well-known (and proven later), the weak convergence of the solution in \(L^{2}\) gives weak convergence in \(H^{1}\) of the nonlocal operator so that the objective function is kept as general as possible (and could even involve spatial derivatives of the nonlocal operator).

\item
In \cref{item:2}, we impose on the admissible set a \(TV\) bound regarding the initial datum. This choice is crucial in the singular limit problem considered later in \cref{sec:singular_limit}, but it might be weakened in future studies thanks to the recent \cite{coclite2025singular} (compare also \cref{sec:open_problems}). However, because the initial datum converges only on compact sets in \(L^{1}(\R)\), we have no convergence---not even locally---of the nonlocal solution, which is the reason for making the objective function independent of the solution. Due to the uniform \(L^{\infty}\) bounds, we do, however, obtain weak\(*\) convergence of \(q_{0}\) in \(L^{\infty}(\R)\), which is enough to conclude that the nonlocal operator is measured in the uniform topology.
\item Eventually, \cref{item:3} fixes the shortcoming of \cref{item:2} that we might not obtain strong convergence in \(C([0,T];L^{1}(\R))\) (i.e., on the entirety of \(\R\)) of the minimizing sequence by proposing that the initial datum converges strongly in \(L^{1}(\R)\) (which is due to the assumed compactness). Then, the objective function can indeed depend on the solution and, once more, as the nonlocal term is one regularity class higher than the solution, the objective can depend on \(W\) in \(C([0,T];W^{1,1}(\R))\). However, we then require higher regularity on the involved admissible set for the velocity, as uniform convergence will need to hold in \(W^{1,\infty}([0,q_{\max}])\), which is why we chose bounds in \(W^{2,\infty}([0,q_{\max}])\) (for this, compare particularly the \(TV\) estimate in \cref{thm:stab}).

\item Eventually, for the objective function, we currently only  assume lower weak semicontinuity in the initial datum and the solution and continuity in \(V\). This is, for instance, satisfied for classical tracking-type functionals (to name a few) like
    \begin{align}
        (q, W, q_0, \gamma, V) & \mapsto \|q(T,\cdot) - q_{\text{T}}\|_{L^{2}(\R)}^{2} \notag \\
        (q, W, q_0, \gamma, V)  & \mapsto \|q - q_{\text{d}}\|_{L^{2}((0, T) \times \R)}^{2}  \notag 
    \intertext{in the case of~\cref{ass:optimal_control}, \cref{item:1},}
        (q, W, q_0, \gamma, V) & \mapsto \big\| W(T, \cdot)  - q_\text{T}\big\|_{L^{2}(I)}^{2}\notag \\
        (q, W, q_0, \gamma, V) & \mapsto \big\| W  - q_\text{d}\big\|_{L^{2}(I_{T}\times I)}^{2} \notag 
   \intertext{for bounded $I \subseteq \R$ in the case of~\cref{ass:optimal_control}, \cref{item:2},}
        (q, W, q_0, \gamma, V) & \mapsto \lVert W(T, \cdot) - q_T \rVert_{L^1(\R)} +  \lVert q(T, \cdot) - q_d \rVert_{L^1(\R)} \notag 
  \intertext{in the case of~\cref{ass:optimal_control}, \cref{item:3}, or}
        (q, W, q_0, \gamma, V) & \mapsto \|q_{0}-q_{0,\text{d}}\|_{L^{1}(I)}^{2} \label{eq:J3}\\
        (q, W, q_0, \gamma, V)&\mapsto \|V-V_{\text{d}}\|_{L^{1}((0,q_{\max}))}^{2} \notag 
    \end{align}
        in any case. Conical combinations of these functionals for
        given \(q_{T}\in L^{2}(I),\ q_{\text{d}}\in L^{2}(I_{T}\times I),\ q_{0,d}\in L^{2}(I)\) and \(V_{\text{d}}\in L^{2}((0,q_{\max}))\) being given and 
        \(\emptyset\neq I_{T}\subset (0,T)\) being open and \(\emptyset\neq I \subseteq \R\) open, not necessarily bounded, are interesting too, as long as~\cref{ass:optimal_control} is respected. We will use some in \cref{sec:examples_numerics}.
\end{itemize}
\end{remark}
So far, it is unclear why, given \cref{ass:optimal_control}, we have a solution to the nonlocal conservation law and why such a solution does not exceed the claimed and required \(L^{\infty}\) bounds that show up in the admissible set. However, this is assured by the following theorem.
\begin{theorem}[existence, uniqueness, and \(L^{\infty}\) bounds of weak solutions]\label{thm:eub}
Let \((q_{0},V,\gamma)\in\mC\times\mV\times\Gamma\), as in any of the cases in \cref{ass:optimal_control}, be given. Then, for any \(T\in\R_{>0}\), there exists a unique weak solution of the corresponding initial value problem in \cref{eq:con},
\[
q\in C\big([0,T];L^{1}_{\textnormal{loc}}(\R)\cap L^{2}(\R)\big)\cap L^{\infty}((0,T);L^{\infty}(\R)),
\]
and \(q\) satisfies the following min/max principle:
\[
0\leq \essinf_{y\in\R}q_{0}(y)\leq q(t,x)\leq \|q_{0}\|_{L^{\infty}(\R)}\leq q_{\max},\qquad (t,x)\in (0,T)\times\R \text{ a.e.}
\]
More specifically, $q$ is given by the formula
\[
q(t, x) = q_0\big(\xi_{W}(t, x; 0)\big)\partial_2 \xi_{W}(t, x; 0),\qquad (t,x)\in (0,T)\times\R \text{ a.e.},
\]
with \(\xi_{W},\) the characteristics, being the unique solution of the following initial value problem for \((t,x,s)\in [0, T] \times \R \times [0, T]\):
\begin{equation}
\begin{aligned}
    \partial_3 \xi_{W}(t,x;s)&=V\big(W(s,\xi_W(t, x; s))\big) \\
    \xi_{W}(t, x; t) &= x.
\end{aligned}
\label{eq:characteristics}
\end{equation}
Here, \(W\in W^{1,\infty}((0,T)\times\R)\) is the unique fixed point of
\begin{equation}
W(t,x)=\int_{\xi_{W}(t,x;0)}^{\infty}\gamma\big(\xi_{W}(0,z;t)-x\big)q_{0}(z)\dd z,\quad (t, x) \in(0, T) \times \R.\label{eq:fixed_point_W}
\end{equation}
If \(q_{0}\in TV(\R)\) in addition, we also obtain
\[
q\in L^{\infty}((0,T);TV(\R)).
\]
\end{theorem}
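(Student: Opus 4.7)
The plan is to prove this via a Banach fixed-point argument on the nonlocal operator \(W\), closely following the strategy of \cite{keimernonlocalbalance2017,Keimer2018}, but tracking the \(L^{2}\) regularity that is new here.

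First, I fix a small time \(\tau\in(0,T]\) to be chosen later and define the map
\[
\Phi:\mathcal{B}\to \mathcal{B},\qquad W\mapsto \widetilde W,
\]
on a suitable closed ball \(\mathcal{B}\subset W^{1,\infty}((0,\tau)\times\R)\) of radius depending on \(q_{\max},\mC_{\Gamma},\mC_{\mathcal{V}}\), by: given \(W\), let \(\xi_W(t,x;\cdot)\) solve the characteristic ODE in \cref{eq:characteristics}; since \(V(W(s,\cdot))\) is Lipschitz in \(x\) with constant \(\mC_{\mathcal{V}}\|\partial_x W\|_{L^\infty}\), Picard–Lindelöf yields a unique flow that is a bi-Lipschitz \(C^{1}\)-diffeomorphism in \(x\) with \(\partial_2\xi_W>0\). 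Define \(q(t,x)\coloneqq q_0(\xi_W(t,x;0))\,\partial_2\xi_W(t,x;0)\) (this is the push-forward formula coming from conservation of mass along characteristics), and set \(\widetilde W(t,x)\coloneqq\int_x^{\infty}\gamma(y-x)q(t,y)\dd y\), which after the change of variables \(y=\xi_W(0,z;t)\) gives exactly \cref{eq:fixed_point_W}.

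Next I would establish the a priori bounds needed to close the fixed-point argument and extract the theorem's conclusions. The min/max principle
\[
0\le\essinf_{y\in\R}q_0(y)\le q(t,x)\le \|q_0\|_{L^\infty(\R)}
\]
follows by differentiating \(\partial_2\xi_W\) against the ODE, obtaining a linear ODE for \(\partial_2\xi_W\) whose coefficient is \(-V'(W)\partial_x W\); combined with the monotonicity assumption \(V'\le 0\) and monotonicity of \(\gamma\) (which implies \(\partial_x W\le 0\)), one deduces \(\partial_2\xi_W\le 1\) and a matching lower bound, yielding the claimed pointwise control of \(q\) uniformly in \(t\). The Lipschitz bound on \(\widetilde W\) in \(W^{1,\infty}\) comes from the \(L^\infty\)-bound on \(\gamma\), the \(L^1\)-normalization, the maximum principle on \(q\), and the Lipschitz regularity of the characteristic flow; these are independent of \(\tau\), so the ball \(\mathcal{B}\) is stable under \(\Phi\). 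A standard Gronwall-type contraction estimate in \(W^{1,\infty}((0,\tau)\times\R)\) (using \(V\in W^{1,\infty}\) and \(\|\gamma\|_{L^1}=1\)) then gives \(\Phi\) contractive for \(\tau\) small, producing a unique local fixed point.

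The \(L^{2}\) regularity \(q(t,\cdot)\in L^{2}(\R)\) is obtained from the push-forward formula: since \(\partial_2\xi_W\in L^\infty\) and \(\xi_W(t,\cdot;0)\) is a bi-Lipschitz bijection with uniformly bounded Jacobian, the change-of-variables formula gives \(\|q(t,\cdot)\|_{L^2(\R)}\le C\|q_0\|_{L^2(\R)}\). Continuity in time in \(L^{2}(\R)\) follows from continuity of the flow in \(t\) and the \(L^2\) modulus of continuity of \(q_0\). Uniqueness of the weak solution is obtained by showing that any weak solution must satisfy the same representation formula (cf.\ \cite{keimernonlocalbalance2017}) and therefore must be the fixed point. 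To extend to arbitrary \(T\), one iterates the local construction: since the a priori bounds (\(L^\infty\), \(L^2\), Lipschitz of \(W\)) are preserved and depend only on \(q_{\max}\), \(\mC_{\mathcal{V}}\), \(\mC_{\Gamma}\) and \(\|q_0\|_{L^2}\), the contraction constant and \(\tau\) can be chosen uniformly, so finitely many steps cover \([0,T]\). Finally, when \(q_0\in TV(\R)\), differentiating the representation formula in \(x\) gives
\[
|q(t,\cdot)|_{TV(\R)}\le \|\partial_2\xi_W(t,\cdot;0)\|_{L^\infty}\,|q_0|_{TV(\R)}+\|q_0\|_{L^\infty}\,\|\partial_{22}\xi_W(t,\cdot;0)\|_{L^1},
\]
and the second term is bounded thanks to the Lipschitz estimate on \(\partial_x W\) coming from the fixed point.

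The main obstacle is the double-derivative term \(\partial_{22}\xi_W\) arising in the \(TV\) bound and in any estimate that requires differentiating the push-forward formula in \(x\); it forces one to control \(\partial_{xx}W\), which in turn is what motivates the higher regularity requirement on \(V\) (namely \(W^{2,\infty}\)) in \cref{item:3}. The growth condition \(\gamma(x)\le x^{-(1+\eps)}\) plays the complementary role of guaranteeing that the nonlocal integral \(\int_x^\infty \gamma(y-x)q(t,y)\dd y\) is well-defined and enjoys the desired decay when \(q\) is only in \(L^2\) rather than \(L^1\), so that the fixed-point argument closes in the setting of \cref{item:1}.
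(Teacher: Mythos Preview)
The paper's own proof is two lines of citations: existence and the representation formula are deferred to \cite{keimernonlocalbalance2017}, and the min/max principle to \cite[Corollary~2.1]{CocliteBV2022} or \cite[Theorem~3.1]{Keimer2023}. Your fixed-point sketch on \(W\) is exactly the strategy of those references, so in overall architecture you are on target.

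There is, however, a real gap in your min/max principle argument. You assert that monotonicity of \(\gamma\) implies \(\partial_x W\le 0\), and from this deduce \(\partial_2\xi_W(t,x;0)\le 1\) pointwise. But \(\partial_x W\) has no global sign: writing
\[
\partial_x W(t,x)=-\gamma(0)\,q(t,x)+\int_0^\infty(-\gamma'(u))\,q(t,x+u)\dd u,
\]
the two contributions compete. The mechanism in \cite{CocliteBV2022,Keimer2023} is Eulerian: at a spatial maximum of \(q(t,\cdot)\) one has \(q(t,x+u)\le q(t,x)\) and \(\int_0^\infty(-\gamma')=\gamma(0)\), hence \(\partial_x W\le 0\) \emph{at that point only}; the PDE in non-divergence form then gives \(\partial_t q=-V'(W)\,\partial_x W\,q\le 0\) there (and \(\ge 0\) at a spatial minimum). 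A Lagrangian route via a pointwise inequality on \(\partial_2\xi_W\) cannot deliver both bounds simultaneously, since \(\partial_2\xi_W\le 1\) everywhere and \(\partial_2\xi_W\ge 1\) everywhere are incompatible unless the flow is volume-preserving.

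A minor misattribution in your last paragraph: the tail condition \(\gamma(x)\le x^{-(1+\varepsilon)}\) plays no role in well-posedness; the paper imposes it only to obtain \(L^1(\R_{\ge 0})\)-compactness of \(\Gamma\) (see \cref{Rem:sol}), and the fixed-point argument needs only \(\gamma\in L^1\cap L^\infty\) nonincreasing.
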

\begin{proof}
    The existence result is given in~\cite{keimernonlocalbalance2017}. The proof for the min/max principle can be found in~\cite[Corollary~2.1]{CocliteBV2022} or~\cite[Theorem~3.1]{Keimer2023}.
\end{proof}

Next, we collect some useful bounds for the nonlocal operator \(W[q,\gamma]\). These will be crucial in the later proof of \cref{Lem:ccar}.
\begin{lemma}[uniform bounds on $W$]\label{lem:ubw}
    For some $C_{\Gamma} \in \R_{>0}$, let 
    \begin{align*}
    \Gamma_\textnormal{r} & \coloneqq \Big \lbrace \gamma \in L^1\big(\R_{\geq 0}; \R_{\geq 0} \big): \gamma\text{ is non-increasing a.e., }\lVert \gamma \rVert_{L^\infty(\R_{\geq 0})} \leq C_{\Gamma},\\
    &\qquad\qquad\qquad  \|\gamma\|_{L^{1}(\R_{\geq0})}=1\Big \rbrace\supseteq \Gamma
    \end{align*}
    be a ``relaxed'' kernel set, $(q_0, V, \gamma) \in \mQ \times \mV \times \Gamma_\textnormal{r}$ and 
    \begin{equation}\label{eq:w}
    W[q](t, x) \coloneqq \int^\infty_x \gamma(y-x)q(t, y)\dd y, \quad \forall (t, x)\in (0, T) \times \R.
    \end{equation}
    Then, it holds that
       \begin{align*}\|W[q]\|_{L^{\infty}((0,T)\times\R)}  &\leq \|q_{0}\|_{L^{\infty}(\R)}\\
        \|\partial_2 W[q]\|_{L^{\infty}((0,T)\times\R)} &\leq  \|q_{0}\|_{L^{\infty}(\R)}|\gamma|_{TV(\R_{\geq0})}\\
        \|\partial_{1}W[q]\|_{L^{\infty}((0,T)\times\R)}\|&  \leq 2 |\gamma|_{TV(\R_{>0})} \| V \|_{L^{\infty}([0, q_{\max}])} q_{\max}.
        \end{align*}
\end{lemma}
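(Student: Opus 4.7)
The first bound is immediate from the maximum principle in \cref{thm:eub} and the normalization $\|\gamma\|_{L^{1}(\R_{\geq0})}=1$: since $0\le q(t,y)\le q_{\max}=\|q_0\|_{L^\infty(\R)}$ a.e., we have $|W[q](t,x)|\le q_{\max}\int_x^\infty \gamma(y-x)\dd y = q_{\max}$.

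The plan for the two derivative bounds is to exploit the monotonicity of $\gamma$ via its layer-cake representation. Since $\gamma\ge0$ is nonincreasing and $\gamma\in L^1(\R_{\ge0})$ forces $\gamma(\infty)=0$, there is a unique nonnegative Radon measure $\mu\coloneqq -\dd\gamma$ on $[0,\infty)$ with $\gamma(u)=\mu([u,\infty))$ and total mass $|\mu|=\gamma(0^+)=|\gamma|_{TV(\R_{\ge0})}$. Substituting into \cref{eq:w}, changing variables $u=y-x$, and applying Fubini gives
\[
W[q](t,x)=\int_0^\infty \gamma(u)q(t,u+x)\dd u=\int_0^\infty \int_x^{x+s} q(t,y)\dd y\,\dd\mu(s).
\]
\Cref{thm:eub} already guarantees $W\in W^{1,\infty}((0,T)\times\R)$, so differentiation under the outer integral is legitimate and yields
\[
\partial_2 W[q](t,x)=\int_0^\infty\bigl[q(t,x+s)-q(t,x)\bigr]\dd\mu(s)=\int_0^\infty q(t,x+s)\dd\mu(s)-q(t,x)\,|\gamma|_{TV(\R_{\ge0})}.
\]
The crucial point is that, thanks to the sign condition $q\ge0$ from \cref{thm:eub}, the first integral lies in $[0,q_{\max}|\gamma|_{TV(\R_{\ge0})}]$ while the second summand lies in $[-q_{\max}|\gamma|_{TV(\R_{\ge0})},0]$. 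Hence the difference itself lies in the symmetric interval $[-q_{\max}|\gamma|_{TV(\R_{\ge0})},q_{\max}|\gamma|_{TV(\R_{\ge0})}]$, giving the desired bound without a factor of two.

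For the time-derivative bound, I would feed the PDE in its conservative form $\partial_t q=-\partial_y(qV(W))$ into the same representation. Because $qV(W)\in L^\infty$ and vanishes at the boundary after integrating against $\dd\mu(s)\int_x^{x+s}\cdot\,\dd y$, one obtains
\[
\partial_1 W[q](t,x)=-\int_0^\infty\bigl[(qV(W))(t,x+s)-(qV(W))(t,x)\bigr]\dd\mu(s).
\]
The nonnegative integrand $qV(W)$ is bounded by $q_{\max}\|V\|_{L^\infty([0,q_{\max}])}$, so the triangle inequality followed by $|\mu|=|\gamma|_{TV(\R_{\ge0})}$ yields $|\partial_1 W[q]|\le 2q_{\max}\|V\|_{L^\infty([0,q_{\max}])}|\gamma|_{TV(\R_{>0})}$, which is the stated estimate. (Here the factor $2$ is unavoidable because $qV(W)$ appears at both $x$ and $x+s$ and we cannot cancel signs as in the spatial case.)

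The only genuine subtlety will be justifying these manipulations when $q$ is merely $L^\infty$ rather than classically differentiable: the layer-cake representation bypasses any direct derivative on $q$ or on $\gamma$, which is why I would set up the formulas in terms of the measure $\mu$ from the outset rather than integrate by parts pointwise. The $W^{1,\infty}$ regularity from \cref{thm:eub} legitimizes the differentiation under the outer integral, and using the PDE in the sense of distributions is compatible with the integral-against-test-interval formulation used above. Everything else reduces to the sign/boundedness arguments already sketched.
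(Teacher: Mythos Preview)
Your argument is correct and, in substance, aligns with the paper's own proof: the paper derives the first bound from the maximum principle, cites \cite[Lemma~3.1]{CocliteBV2022} for the second, and obtains the third by smoothly approximating $q_0$, differentiating $W$ in $t$, and integrating by parts against $\gamma$.

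Your layer-cake representation $\gamma(u)=\mu([u,\infty))$ with $\mu=-\dd\gamma$ is a genuinely useful repackaging. It replaces the paper's external citation for $\partial_2 W$ by a two-line self-contained argument, and it makes the role of the monotonicity of $\gamma$ and of the sign condition $q\ge0$ completely transparent (in particular why the spatial bound carries no factor~$2$ while the temporal one does). For $\partial_1 W$ your computation and the paper's integration by parts are literally the same identity, just written with the measure $\mu$ rather than with $\gamma'$. What the paper's route buys is a clean handling of the regularity issue you flag at the end: by first approximating $q_0$ smoothly, $q$ becomes a classical solution, the pointwise evaluations of $qV(W)$ at $x$ and $x+s$ are unproblematic, and the uniform estimates then pass to the limit. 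Your distributional justification is workable but would still need this approximation step (or a Lebesgue-point argument) to be airtight when $\mu$ has atoms, so in practice you would end up invoking the same smoothing device the paper uses.
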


Note that these bounds do not depend on $w$ but only on the specifications of the sets $\mQ$, $\Gamma$, and $\mV$.
\begin{proof}
The first estimate is a direct consequence of \cref{thm:eub}, the second can be found in \cite[Lemma~3.1]{CocliteBV2022}, and the third can be obtained by smoothly approximating $q_0$, then recognizing that \(q(t, \cdot)\) has the same regularity for $t>0$ and differentiating \(W\) with respect to \( t\). An integration by parts quickly leads to the result.        
\end{proof}

 In the following, we will study how the characteristics as introduced in \cref{eq:characteristics} behave when the nonlocal operator varies. Similar results concerning the stability of the nonlocal operator were also obtained in \cite[Lemma~3.1]{Coclite2023} and include the self-mapping property required even to prove the existence of a fixed point as in \cref{eq:fixed_point_W}.
 
\begin{theorem}[convergence of characteristics]\label{Lem:ccar}
    Let~\cref{ass:optimal_control}, \cref{item:1} hold and $\big(q_{0, k}, V_k, \gamma_{k} \big)_{k \in \N} \subseteq \mQ \times \mV \times\Gamma$ be any sequence. Then, there exists a subsequence (that we denote by the same index), a triple $\big(q^\ast_0, V^\ast, \gamma^\ast \big) \in \mQ \times \mV \times \Gamma$, and some $\hat{W} \in W^{1, \infty}\big([0, T] \times \R\big)$ such that
    \begin{enumerate}[leftmargin=15pt]
        \item $q_{0, k} \xrightharpoonup[*]{k \rightarrow \infty} q_0^\ast \text{ in }L^\infty(\R)$,
        \item $V_k \overset{k \rightarrow \infty}{\longrightarrow} V^\ast \text{ in }C\big([0, q_{\max}]\big)$,
        \item $\gamma_{k} \overset{k \rightarrow \infty}{\longrightarrow} \gamma^\ast\text{ in }L^1\big(\R_{\geq 0}\big)$, \label{item:a}
        \item $W\left[q\big[q_{0, k}, V_k, \gamma_{k}\big] \right]\overset{k \rightarrow \infty}{ \longrightarrow} \hat{W}$ in $C\big([0, T] \times \R\big)$, \label{item:b}
        \item \label{item:c} $\xi_{W\left[q\left[q_{0, k}, V_k, \gamma_{k}\right] \right]} \overset{k \rightarrow \infty}{ \longrightarrow} \hat{\xi}$ in $C\big([0, T] \times \R \times [0, T]\big)$, where $\hat{\xi}(t, x; \cdot)$ solves, for \((t,x,s)\in [0,T]\times\R\times[0,T]\),
        \begin{align*}
        \partial_3 \hat{\xi}(t, x; s) &= V^\ast\big(\hat{W}(\hat{\xi}(t, x; s)) \big)\\
        \hat{\xi}(t, x; t) &= x,
        \end{align*}

        \item $\hat{W} = W\left[q\big[q_{0, k}, V_k, \gamma_{k}\big] \right]$.\label{item:d}
    \end{enumerate}
\end{theorem}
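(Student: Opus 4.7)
The plan is to extract compactness at the level of the data, propagate it through the nonlocal operator and the characteristic ODE, and finally identify the limit $\hat W$ through the fixed-point representation from \cref{thm:eub}.

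I would first produce the subsequences for items (1)--(3). Banach--Alaoglu applied to the uniformly $L^\infty$-bounded $q_{0,k}$ yields a weak$^\ast$ convergent subsequence, giving~(1). The set $\mV$ is bounded in $W^{1,\infty}([0,q_{\max}])$, which embeds compactly in $C([0,q_{\max}])$, so Arzelà--Ascoli yields~(2). For~(3), the family $\{\gamma_k\}$ is monotone and uniformly bounded in $L^\infty(\R_{\geq 0})$, so Helly's selection theorem provides a.e.\ pointwise convergence along a subsequence; the uniform tail envelope $\gamma_k(x)\leq x^{-(1+\eps)}$ on $[\bar R,\infty)$ is an integrable majorant, and dominated convergence upgrades the pointwise limit to strong $L^1(\R_{\geq 0})$ convergence.

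Next I would obtain items~(4) and~(5). By \cref{lem:ubw}, the sequence $W_k \coloneqq W[q[q_{0,k},V_k,\gamma_k]]$ is uniformly bounded in $W^{1,\infty}((0,T)\times\R)$, hence equicontinuous and bounded, so Arzelà--Ascoli gives a further subsequence converging uniformly on compact subsets of $[0,T]\times\R$ to some $\hat W\in W^{1,\infty}((0,T)\times\R)$. For~(5), the right-hand side $V_k(W_k(s,\cdot))$ of the characteristic ODE is uniformly Lipschitz in the spatial argument, and the candidate limit right-hand side $V^\ast(\hat W(s,\cdot))$ is also Lipschitz. The limiting ODE therefore has a unique solution $\hat\xi$, and a Gronwall estimate for $|\xi_{W_k}-\hat\xi|$ that uses $V_k\to V^\ast$ uniformly and $W_k\to\hat W$ uniformly on compacta yields locally uniform convergence of $\xi_{W_k}$ to $\hat\xi$.

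Finally, for~(6), the strategy is to pass to the limit in the fixed-point formula \cref{eq:fixed_point_W}
\begin{equation*}
W_k(t,x)=\int_{\xi_{W_k}(t,x;0)}^{\infty}\gamma_k\big(\xi_{W_k}(0,z;t)-x\big)\,q_{0,k}(z)\dd z.
\end{equation*}
The lower limit converges by the uniform convergence of the characteristics. Since $\xi_{W_k}(0,\cdot;t)$ is uniformly Lipschitz (with constant controlled by $\|V_k\|_{L^\infty}$), the composition $\gamma_k(\xi_{W_k}(0,\cdot;t)-x)$ converges to $\gamma^\ast(\hat\xi(0,\cdot;t)-x)$ strongly in $L^1(\R_z)$, thanks to the strong $L^1$ convergence of $\gamma_k$ and the uniform tail decay provided by the envelope in $\Gamma$. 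Pairing this strong $L^1$ convergence against the weak$^\ast$-$L^\infty$ convergence of $q_{0,k}$ yields that the integral passes to the limit, giving
\begin{equation*}
\hat W(t,x)=\int_{\hat\xi(t,x;0)}^{\infty}\gamma^\ast\big(\hat\xi(0,z;t)-x\big)\,q_0^\ast(z)\dd z,
\end{equation*}
which is exactly the fixed-point characterization of $W[q[q_0^\ast,V^\ast,\gamma^\ast]]$, and uniqueness in \cref{thm:eub} closes the argument.

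I expect the main obstacle to be this last step. Two issues must be handled simultaneously: controlling the composition $\gamma_k\circ(\xi_{W_k}(0,\cdot;t)-x)$ on the unbounded $z$-domain strongly in $L^1(\R)$, which requires extracting a uniform-in-$k$ tail bound from the envelope in $\Gamma$ together with the uniform Lipschitz bound on $\xi_{W_k}(0,\cdot;t)$; and ensuring that the pairing with $q_{0,k}$, which converges only weakly$^\ast$ in $L^\infty$, passes to the limit, which relies precisely on having the above convergence in strong $L^1(\R)$ rather than only in $L^1_{\mathrm{loc}}(\R)$.
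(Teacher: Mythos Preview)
Your proposal is correct and takes essentially the same approach as the paper: compactness for the data, Arzel\`a--Ascoli for $W_k$ via the uniform $W^{1,\infty}$ bounds of \cref{lem:ubw}, a Gr\"onwall comparison for the characteristics, and passing to the limit in the fixed-point representation \cref{eq:fixed_point_W}. The paper differs only in packaging: it invokes Riesz--Kolmogorov rather than Helly plus dominated convergence for item~(3), and for item~(6) it writes out the four-term splitting of the fixed-point integral explicitly (successively replacing the lower limit, the characteristic inside $\gamma_k$, then $\gamma_k$, then $q_{0,k}$) instead of bundling the first three replacements into a single strong-$L^1$ convergence of $z\mapsto \gamma_k(\xi_{W_k}(0,z;t)-x)$ to be paired against the weak$^\ast$ limit of $q_{0,k}$.
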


\begin{proof}
        For the sake of brevity, we define
        \begin{equation}\label{eq:abb}
        W_k \coloneqq W\left[q\big[q_{0, k}, V_k, \gamma_{k}\big] \right], \quad W^\ast \coloneqq W\left[q\big[q_0^*, V^*, \gamma^*\big] \right],
        \end{equation}
        for all $k \in \N$.
    \begin{enumerate}[leftmargin=15pt]
        \item This is mostly a standard compactness result: One can use Banach--Alaoglu~\cite[8.5]{alt2016eng} for $\big( q_{0, k}\big)_{k \in \N}$, which is bounded in $L^\infty$, and Arzelà--Ascoli~\cite[4.12]{alt2016eng} for $V_k$. 
        Convergence of $\gamma_k$ is due to the Riesz--Kolmogorov theorem~\cite[4.16]{alt2016eng}. 
        \item The Arzelà--Ascoli theorem is applicable because of the bounds that we found in~\cref{lem:ubw}, so one obtains, on a subsequence, the existence of the named \(\hat{W}\) of corresponding regularity.
        \item To see this, observe for $(t, x, s) \in [0, T] \times \R \times [0, T]$, where we assume without loss of generality that $s \leq t$, that
        \begin{align*}
            &\big|\xi_{W_k}(t, x; s) - \hat{\xi}(t, x; s)\big| \\
            & = \left \lvert \int^t_s V_k\big(W_k(r, \xi_{W_k}(t, x; r) \big) - V^\ast\big(\hat{W}(r, \hat{\xi}(t, x; r)) \big)~\dd r \right \rvert. \\
            \intertext{Adding zeros leads to}
            & \leq \|V_k' \|_{L^\infty([0, q_{\max}])}  \left \lvert \int^t_s W_k(r, \xi_{W_k}(t, x; r)  - W_k(r, \hat{\xi}(t, x; r))\dd r \right \rvert \\
            & \quad + \|V_k' \|_{L^\infty([0, q_{\max}])} \left \lvert \int^t_s W_k(r, \hat{\xi}(t, x; r))  - \hat{W}(r, \hat{\xi}(t, x; r))~\dd r \right \rvert \\
            & \quad + \left \lvert \int^t_s V_k\big(\hat{W}(r, \hat{\xi}(t, x; r)) \big) - V^\ast\big(\hat{W}(r, \hat{\xi}(t, x; r)) \big)~\dd r \right \rvert, \\
            \intertext{and using~\cref{lem:ubw} for the emerging \(\partial_{2}W_{k}\), as well as the bounds on $V_k'$ as in \cref{ass:optimal_control}, we obtain}
            & \leq  q_{\max} C_{\mV}|\gamma_k |_{\mathrm{TV}(\mathbb{R}_{\geq 0})}  \int^t_s \lvert \xi_{W_k}(t, x; r) - \hat{\xi}(t, x; r) \rvert~\dd r\\
            &\quad + C_{\mV} T  \| W_k - \hat{W} \|_{L^\infty((0, T) \times \R)} + T \sup_{r \in [0, q_{\max}]} \| V_k - V^\ast \|_{L^\infty([0, q_{\max}])}.
        \end{align*}
        Gr\"{o}nwall's Lemma and the uniform bound $C_\Gamma$ on the total variation of $\gamma_k$, as in \cref{ass:optimal_control}, yield
        \begin{align*}
        \big\|\xi_{W_k} - \hat{\xi} \big\|_{L^{\infty}((0,T)\times\R\times(0,T))} & \leq \Big( C_{\mV} T \| W_k - \hat{W} \|_{L^\infty((0, T) \times \R)} \\
        & \qquad + T  \| V_k - V^\ast \|_{L^\infty([0, q_{\max}])} \Big) \cdot \exp\left(C_{\mV}q_{\max} C_{\Gamma}T \right).
        \end{align*}
        The desired uniform convergence of \(\xi_{W_{k}}\) to \(\xi_{\hat{W}}\) follows as \(V_{k}\) and \(W_{k}\) converge uniformly, as obtained before in \cref{item:a} and \cref{item:b}.
        \item For $\hat{W}$ to have the desired shape, it must hold that
        \begin{equation}\label{eq:hatW}
        \hat{W}(t, x) = \int^{\infty}_{\hat{\xi}(t, x; 0)} q_0^\ast(z) \gamma^\ast \big(\hat{\xi}(0, z; t)-x\big) ~\dd z
        \end{equation}
        for all $(t, x) \in (0, T) \times \R$. We prove this by showing that $W_k$ converges pointwise to the RHS of~\cref{eq:hatW}. This yields the result if we combine it with the uniform convergence from \cref{item:b}.
        \item Now, consider, for $(t, x) \in [0, T] \times \R$,
        \begin{small}
        \begin{align}
            & \left \lvert W_k(t, x) - \int^{\infty}_{\hat{\xi}(t, x; 0)}\!\! q_0^\ast(z) \gamma^\ast \big(\hat{\xi}(0, z; t)-x\big) ~\dd z \right \rvert \notag\\
            & = \bigg \lvert \int^{\infty}_{\xi_{W_k}(t, x; 0)}\!\!\! q_{0, k}(z) \gamma_{k}\big(\xi_{W_k}(0, z; t)-x\big)~\dd z - \int^{\infty}_{\hat{\xi}(t, x; 0)}\!\!\! q_0^\ast(z) \gamma^\ast\big(\hat{\xi}(0, z; t)-x\big)~\dd z \bigg \rvert.\notag
            \intertext{Add zeros:}
            & = \bigg \lvert \int^{\infty}_{\xi_{W_k}(t, x; 0)}\!\!\!\!\!\!\! q_{0, k}(z) \gamma_{k}\big(\xi_{W_k}(0, z; t)-x\big)\dd z - \int^{\infty}_{\hat{\xi}(t, x; 0)}\!\!\!\!\!\!\! q_{0, k}(z) \gamma_{ k}\big(\xi_{W_k}(0, z; t)-x\big)\dd z \bigg \rvert \label{eq:conv1}\\
            & \quad + \bigg \lvert \int^{\infty}_{\hat{\xi}(t, x; 0)}\!\!\!\!\!\! q_{0, k}(z) \gamma_{k}\big(\xi_{W_k}(0, z; t)-x\big)\dd z - \int^{\infty}_{\hat{\xi}(t, x; 0)}\!\!\!\!\!\! q_{0, k}(z) \gamma_{ k}\big(\hat{\xi}(0, z; t)-x\big)\dd z \bigg \rvert \label{eq:conv2}\\
            & \quad + \bigg \lvert \int^{\infty}_{\hat{\xi}(t, x; 0)}\!\!\!\!\!\! q_{0, k}(z) \gamma_{k}\big(\hat{\xi}(0, z; t)-x\big)\dd z -\int^{\infty}_{\hat{\xi}(t, x; 0)}\!\!\!\!\!\! q_{0, k}(z) \gamma^\ast \big(\hat{\xi}(0, z; t)-x\big)\dd z \bigg \rvert\label{eq:conv3} \\
            & \quad + \left \lvert \int^{\infty}_{\hat{\xi}(t, x; 0)}\!\!\!\!\!\! q_{0, k}(z) \gamma^\ast\big(\hat{\xi}(0, z; t)-x\big)\dd z - \int^{\infty}_{\hat{\xi}(t, x; 0)}\!\!\!\!\!\! q_0^\ast(z) \gamma^\ast \big(\hat{\xi}(0, z; t)-x\big)\dd z \right \rvert. \label{eq:conv4}
            \intertext{Now, in order to simplify~\cref{eq:conv1}, we use the substitution formula~\cite[Lemma~2.4]{Coron2020} in~\cref{eq:conv2}, and in~\cref{eq:conv3}, estimate $\lvert q_{0, k} \rvert \leq q_{\max}$ and substitute $y \coloneqq \hat{\xi}(0, z; t)-x$, yielding}
            & \leq \left \lvert \int_{\min \lbrace \xi_{W_k}(t, x; 0), \hat{\xi}(t, x; 0) \rbrace}^{\max \lbrace \xi_{W_k}(t, x; 0), \hat{\xi}(t, x; 0) \rbrace} q_{0, k}(z) \gamma_{k}\big(\xi_{W_k}(0, z; t) - x \big)~\dd z \right \rvert \label{eq:mes} \\
            & \quad + q_{\max} C_{\Gamma} \big \lVert \xi_{W_k} - \hat{\xi} \big \rVert_{C([0, T] \times \R \times [0, T])} \notag\\
            & \quad + q_{\max} \exp\left( TC_{\mV} q_{\max} C_{\Gamma}\right) \left \lVert  \gamma_k - \gamma^\ast \right \rVert_{L^1(\R_{\geq 0})} \notag \\
            & \quad + \left \lvert \int^{\infty}_{\hat{\xi}(t, x; 0)} \big(q_{0, k}(z) -  q_0^\ast(z)\big)\gamma^\ast\big(\hat{\xi}(0, z; t)-x\big)  ~\dd z \right \rvert, \notag 
            \intertext{and estimating \cref{eq:mes},}
            & \leq 2q_{\max} C_{\Gamma} \big\lVert \xi_{W_k} - \hat{\xi}\big\rVert_{C([0, T] \times \R \times [0, T])} \label{eq:fc1}\\
            & \quad + q_{\max} \exp\left( TC_{\mV} q_{\max} C_{\Gamma}\right) \left \lVert  \gamma_k - \gamma^\ast \right \rVert_{L^1(\R_{\geq 0})} \label{eq:fc3} \\
            & \quad + \left \lvert \int^{\infty}_{\hat{\xi}(t, x; 0)} \big(q_{0, k}(z) -  q_0^\ast(z)\big)\gamma^\ast\big(\hat{\xi}(0, z; t)-x\big)  ~\dd z \right \rvert. \label{eq:fc4}
    \end{align}
    \end{small}
    \item Now,~\cref{eq:fc1} converges to $0$ because of the uniform convergence of the characteristics,~\cref{eq:fc3} converges because of the $L^1$-convergence of $\gamma_k$, and~\cref{eq:fc4} converges because of the weak$\ast$ convergence of $q_{0, k}$. In total, 
    \[
    \hat{W}(t, x) = \lim_{k \rightarrow \infty} W_k(t, x) = \int^{\infty}_{\hat{\xi}(t, x; 0)} q_0^\ast(z) \gamma^\ast \big(\hat{\xi}(0, z; t)-x\big) ~\dd z
    \]
    for all $(t, x) \in [0, T] \times \R$ (and even uniformly because of \cref{item:b,item:c}). This proves \cref{item:d}. 
     \end{enumerate}
\end{proof}

 When later dealing with the existence of optimizers in the case of \cref{ass:optimal_control}, \cref{item:3}, we require another stability estimate on nonlocal solutions when changing the initial datum, the velocity, and the nonlocal kernel in a suitable topology. This is provided by the following theorem.
\begin{theorem}[strong stability of the nonlocal conservation law]\label{thm:stab}
    Let~\cref{ass:optimal_control}, \cref{item:3} hold and let $(q_0, V, \gamma), (\hat{q}_0, \hat{V}, \hat{\gamma}) \in \mathcal{Q} \times \mathcal{V} \times \Gamma$ be arbitrary. Then, any weak solution with a datum in this set satisfies the following uniform \(TV\) bound for \(t\in[0,T]\):
\begin{equation}   
\begin{split}&\sup_{\substack{(q_{0},V,\gamma)\in\\\mQ\times\mV\times\Gamma}}|q[q_{0},V,\gamma](t,\cdot)|_{TV(\R)}\leq \mC_{\textnormal{TV}}(t)\\
    &\mC_{\textnormal{TV}}(t) \coloneqq \big(\mC_{\mQ}+t\|V''\|_{L^{\infty}([0,q_{\max}])}\mC_{\mQ_{1,0}}q_{\max}^{2}\mC_{\Gamma}^{2}\Big)\mathrm{e}^{2t\|V'\|_{L^{\infty}([0,q_{\max}])}q_{\max}\mC_{\Gamma}}.
\end{split}
\label{eq:TV_bounds}
\end{equation}
    Additionally, the following stability result holds for any \(t\in[0,T]\):
    \begin{small}
    \begin{equation}
    \begin{split}
    &\| q[q_0, V, \gamma](t, \cdot) - q[\hat{q}_0, \hat{V}, \hat{\gamma}](t, \cdot) \|_{L^1(\R)} \\
    &\leq \bigg(\|q_{0}-\tilde{q}_{0}\|_{L^{1}(\R)}+\Big(tq_{\max}\mC_{\mQ_{1,0}}+\int_{0}^{t}\mC_{\textnormal{TV}}(s)\dd s\Big)\|V-\hat{V}\|_{W^{1,\infty}([0,q_{\max}])}\\
    &\ +\Big(2\!\!\int_{0}^{t}\!\!\mC_{\textnormal{TV}}(s)\dd s\mV q_{\max}+t\mV\mC_{\mQ_{1,0}}\mC_{\Gamma}\Big)\|\gamma-\hat{\gamma}\|_{L^{1}(\R_{\geq0})}\bigg)\mathrm{e}^{\mV\mC_{\Gamma}t(\mC_{\textnormal{TV}}(T)+q_{\max}(1+q_{\max}\mC_{\Gamma}))} .
    \end{split} \notag 
    \end{equation}
    \end{small}
    
\end{theorem}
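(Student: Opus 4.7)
The plan is to prove the two estimates essentially independently, since the stability bound uses the uniform TV bound for the ``other'' solution $\hat q$ in order to measure quantities like $|V(W)-\hat V(\hat W)|_{TV}$. Both arguments follow the same template: approximate the initial data by smooth, compactly supported functions so that the characteristic formula of \cref{thm:eub} produces a classical solution, derive a differential inequality for the quantity of interest, invoke the a~priori bounds on $W$ and its spatial derivatives from \cref{lem:ubw} together with mass conservation $\|q(t,\cdot)\|_{L^1(\R)}\leq\mC_{\mQ_{1,0}}$, and pass to the limit in the smoothing at the end.

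\textbf{Step 1 (TV bound).} I would differentiate the nonlocal conservation law $\partial_t q + V(W)\partial_x q = -V'(W)\partial_x W\,q$ in $x$ to obtain a transport equation for $u\coloneqq\partial_x q$ with source terms involving $V''(W)(\partial_x W)^2 q$ and $V'(W)\partial_{xx} W\,q$. Multiplying by $\sgn(u)$, integrating over $\R$ and using that the transport contribution $\int V(W)\partial_x|u|\dd x=-\int V'(W)\partial_x W\,|u|\dd x$, one arrives at
\[
\tfrac{d}{dt}|q(t,\cdot)|_{TV(\R)} \leq 2\|V'\|_{L^\infty}\|\partial_x W\|_{L^\infty}|q(t,\cdot)|_{TV(\R)} + \int_\R q\bigl(|V''(W)|(\partial_x W)^2 + |V'(W)||\partial_{xx}W|\bigr)\dd x.
\]
The prefactor $2\|V'\|_{L^\infty}q_{\max}\mC_\Gamma$ matches the exponential rate in $\mC_{TV}(t)$ via \cref{lem:ubw}. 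The source integrand is bounded in $L^1_x$ by $\mC_{\mQ_{1,0}}q_{\max}^2\mC_\Gamma^2$ (with the two factors of $\mC_\Gamma$ coming from one derivative each falling on the kernel, which is handled after an integration by parts that uses $\gamma$ being non-increasing and of bounded variation, so that $|\gamma|_{TV(\R_{\geq 0})}=\gamma(0)\leq\mC_\Gamma$). Grönwall's inequality then delivers exactly the expression for $\mC_{TV}(t)$.

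\textbf{Step 2 (stability).} For the stability bound I would write the difference equation
\[
\partial_t(q-\hat q) + \partial_x\bigl((q-\hat q)V(W)\bigr) = -\partial_x\bigl(\hat q(V(W)-\hat V(\hat W))\bigr),
\]
multiply by $\sgn(q-\hat q)$, and integrate. The left-hand side yields, exactly as in Step~1, a term controlled by $\|V'\|_{L^\infty}q_{\max}\mC_\Gamma\|q-\hat q\|_{L^1(\R)}$, while the right-hand side is controlled by $|\hat q(V(W)-\hat V(\hat W))|_{TV(\R)}$. Splitting $V(W)-\hat V(\hat W)=(V-\hat V)(\hat W)+(V(W)-V(\hat W))$ and using the uniform $TV$ bound from Step~1 on $\hat q$, the task reduces to estimating $\|W-\hat W\|_{L^1(\R)\cap L^\infty(\R)}$ and $|W-\hat W|_{TV(\R)}$. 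From the splitting
\[
W-\hat W = \int_{\cdot}^\infty(\gamma-\hat\gamma)(y-\cdot)q(t,y)\dd y + \int_{\cdot}^\infty\hat\gamma(y-\cdot)(q-\hat q)(t,y)\dd y,
\]
one obtains linear control of $W-\hat W$ by $\|q-\hat q\|_{L^1}$, $\|\gamma-\hat\gamma\|_{L^1}$, and (for the TV) the uniform $TV$ bound on $q$. Collecting all pieces one gets an inequality of the form $\tfrac{d}{dt}\|q-\hat q\|_{L^1}\leq A\|q-\hat q\|_{L^1} + B(t)$ with $A$ and $B(t)$ matching the explicit constants in the statement, and Grönwall closes the argument.

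\textbf{Main obstacle.} The delicate point is handling the second-order term $\partial_{xx}W$ in Step~1 and the BV-norm of $V(W)-\hat V(\hat W)$ in Step~2, since neither $\gamma$ (only BV and monotone) nor $q(t,\cdot)$ (only BV) is smooth. These must be made sense of as measures via integration by parts on the convolution, transferring a derivative from the kernel onto $q$ (or vice versa); this is precisely where the $W^{2,\infty}$ regularity of $V$ imposed in \cref{item:3} becomes indispensable, since $V''$ multiplies $(\partial_x W)^2$, and the $W^{1,\infty}$ regularity assumed in \cref{item:1,item:2} would not suffice to close the TV estimate. The rest of the proof is bookkeeping: carefully identifying which combination of $q_{\max}$, $\mC_\Gamma$, $\mC_{\mQ_{1,0}}$, and $\|V\|_{W^{2,\infty}}$ appears at each step in order to reproduce the explicit constants displayed in~\cref{eq:TV_bounds} and the subsequent stability estimate.
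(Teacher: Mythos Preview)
Your proposal is correct and follows the paper's strategy exactly: pass to smooth approximations, differentiate the relevant \(L^{1}\)-quantity in time, apply Kruzhkov's sign trick, estimate with the bounds from \cref{lem:ubw}, and close by Gr\"onwall. Two remarks. In Step~1 your displayed differential inequality double-counts: the direct computation produces coefficient~\(1\) (not~\(2\)) on the linear term, and the second copy comes precisely from the \(|V'(W)||\partial_{xx}W|\,q\) contribution you have placed in the source---this term is \emph{not} an \(O(1)\) source but is itself bounded by \(\|V'\|_{L^\infty}q_{\max}|\gamma|_{TV}|q(t,\cdot)|_{TV}\) once you write \(\partial_{xx}W=\partial_{x}W[\partial_{x}q]\) and transfer the outer derivative onto the kernel; moving it to the linear side is what yields the factor~\(2\) in the exponent and leaves only the \(V''\)-term as a genuine source, matching \(\mC_{\textnormal{TV}}(t)\). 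In Step~2 the paper organizes the splitting differently---it telescopes \(V(W[q])q-\hat V(\hat W[\hat q])\hat q\) into four pieces with \(q\) (rather than your \(\hat q\)) as the common outer factor---but this is purely cosmetic and leads to the same Gr\"onwall inequality.
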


\begin{proof}
We only sketch the proof and rely on the fact that solutions can be approximated smoothly; that is, we can assume that we have strong solutions.
Then, the total variation bound is straightforward and can be made uniform by using the uniform bounds in \cref{ass:optimal_control},\ \cref{item:3}.

Concerning the stability, we look into the time derivative of the \(L^{1}\)-norm of the difference. For \(t\in[0,T]\),
with the abbreviation \[q(t,\cdot)\coloneqq q[q_0, V, \gamma](t, \cdot),\qquad \hat{q}(t,\cdot)\coloneqq q[\hat{q}_0, \hat{V}, \hat{\gamma}](t, \cdot) \text{ on } \R\]
and with \(\hat{W}\) denoting the nonlocal operator with kernel \(\hat{\gamma}\) and \(W\) the nonlocal operator with kernel \(\gamma\), we obtain
\begin{align*}
    &\partial_{t}\|q(t,\cdot)-\hat{q}(t,\cdot)\|_{L^{1}(\R)}\\
    &=-\int_{\R}\sgn\big(q-\hat{q}\big)\partial_{x}\big(V(W[q])q-\hat{V}(\hat{W}[\hat{q}])\hat{q}\big)\dd x.
    \intertext{Adding zeros leads to}
    &=-\int_{\R}\sgn\big(q-\hat{q}\big)\partial_{x}\big(V(W[q])q-\hat{V}(W[q])q\big)\dd x\\
    &\quad -\int_{\R}\sgn\big(q-\hat{q}\big)\partial_{x}\big(\hat{V}(W[q])q-\hat{V}(\hat{W}[q])q\big)\dd x\\
    &\quad -\int_{\R}\sgn\big(q-\hat{q}\big)\partial_{x}\big(\hat{V}(\hat{W}[q])q-\hat{V}(\hat{W}[\hat{q}])q\big)\dd x\\
    &\quad -\int_{\R}\sgn\big(q-\hat{q}\big)\partial_{x}\big(\hat{V}(\hat{W}[\hat{q}])q-\hat{V}(\hat{W}[\hat{q}])\hat{q}\big)\dd x.
    \intertext{We next estimate the RHS terms by taking advantage of the definition of the nonlocal operator \cref{eq:nonlocal_operator}, the previously derived bounds in \cref{lem:ubw}, and \(\partial_{2}W[q]\equiv W[\partial_{2}q]\):}
    %&\leq -\int_{\R}\sgn\big(q(t,x)-\hat{q}(t,x)\big)\partial_{x}W[q]q\big(V'(W[q])-\hat{V}'(W[q])\big)\dd x\\
    %&\quad -\int_{\R}\sgn\big(q(t,x)-\hat{q}(t,x)\big)\partial_{x}q\big(V(W[q])-\hat{V}(W[q])\big)\dd x\\
    %&\quad -\int_{\R}\sgn\big(q(t,x)-\hat{q}(t,x)\big)\partial_{x}q\big(\hat{V}(W[q])-\hat{V}(\hat{W}[q])\big)\dd x\\
    %&\quad +\|\hat{V}'\|_{L^{\infty}([0,q_{\max}])}\int_{\R}q\big|\partial_{x}W[q]-\partial_{x}\hat{W}[q]\big|\dd x\\
    %&\quad +\|\hat{V}''\|_{L^{\infty}(\R)}\int_{\R} |\partial_{x}W[q]||W[q]-\hat{W}[q]|\dd x\\
    %&\quad+ \|\hat{V}'\|_{L^{\infty}([0,q_{\max}])}\int_{\R}|\partial_{x}q||\hat{W}[q-\hat{q}]|\dd x\\
    %&\quad +\|\hat{V}'\|_{L^{\infty}([0,q_{\max}])}\int_{\R}q|\partial_{x}\hat{W}[q-\hat{q}]|\dd x\\
    %&\quad +\|\hat{V}''\|_{L^{\infty}([0,q_{\max}])}\int_{\R}q|\partial_{x}\hat{W}[q]||\hat{W}[q-\hat{q}]|\dd x
    %\intertext{}
    &\leq \|V'-\hat{V}'\|_{L^{\infty}([0,q_{\max}])}\|\partial_{2}W[q]\|_{L^{\infty}((0,T)\times\R)}\|q(t,\cdot)\|_{L^{1}(\R)}\\
    &\quad + |q(t,\cdot)|_{TV(\R)}|\|V-\hat{V}\|_{L^{\infty}([0,q_{\max}])}\\
    &\quad +|q(t,\cdot)|_{TV(\R)}\|V'\|_{L^{\infty}([0,q_{\max}])}\|q(t,\cdot)\|_{L^{\infty}(\R)}\|\gamma-\hat{\gamma}\|_{L^{1}(\R_{\geq0})}\\
    &\quad +\|\hat{V}'\|_{L^{\infty}([0,q_{\max}])}\|q(t,\cdot)\|_{L^{\infty}(\R)}|q(t,\cdot)|_{TV(\R)}\|\gamma-\tilde{\gamma}\|_{L^{1}(\R_{\geq0})}\\
    &\quad +\|\hat{V}''\|_{L^{\infty}([0,q_{\max}])}\|\partial_{2}\hat{W}[q]\|_{L^{1}(\R)}\|\gamma-\hat{\gamma}\|_{L^{1}(\R_{\geq0})}\\
    &\quad+ \|\hat{V}'\|_{L^{\infty}([0,q_{\max}])}|q(t,\cdot)|_{TV(\R)}\|\hat{\gamma}\|_{L^{\infty}(\R_{\geq0})}\|q(t,\cdot)-\hat{q}(t,\cdot)\|_{L^{1}(\R)}\\
    &\quad +\|\hat{V}'\|_{L^{\infty}([0,q_{\max}])}\|q(t,\cdot)\|_{L^{\infty}(\R)}|\hat{\gamma}|_{TV(\R_{\geq0})}\|q(t,\cdot)-\hat{q}(t,\cdot)\|_{L^{1}(\R)}\\
    &\quad +\|\hat{V}''\|_{L^{\infty}([0,q_{\max}])}\|q(t,\cdot)\|_{L^{\infty}(\R)}\|\partial_{2}\hat{W}[q]\|_{L^{\infty}((0,T)\times \R)} \\
    & \qquad \cdot \|\hat{\gamma}\|_{L^{1}(\R)}\|q(t,\cdot)-\tilde{q}(t,\cdot)\|_{L^{1}(\R)}.
    \intertext{Next, we make the estimate uniform by applying the bounds guaranteed in \cref{ass:optimal_control}, \cref{item:3} and the uniform \(TV\) bounds in \cref{eq:TV_bounds}:}
    &\leq q_{\max}\mC_{\mQ_{1,0}}\|V'-\hat{V}'\|_{L^{\infty}([0,q_{\max}])}+ \mC_{\text{TV}}(t)\|V-\hat{V}\|_{L^{\infty}([0,q_{\max}])}\\
    &\quad +2\mC_{\text{TV}}(t)\mV q_{\max}\|\gamma-\hat{\gamma}\|_{L^{1}(\R_{\geq0})}+\mV\mC_{\mQ_{1,0}}\mC_{\Gamma}\|\gamma-\hat{\gamma}\|_{L^{1}(\R_{\geq0})}\\
    &\quad+ \mV\mC_{\text{TV}}(t)\mC_{\Gamma}\|q(t,\cdot)-\hat{q}(t,\cdot)\|_{L^{1}(\R)}
    +\mV q_{\max}\mC_{\Gamma}\big(1+q_{\max}\mC_{\Gamma}\big)\|q(t,\cdot)-\hat{q}(t,\cdot)\|_{L^{1}(\R)}
\end{align*}
from which, together with Gr\"{o}nwall's inequality, the conclusion follows.
\end{proof}

The next theorem is one of the main contributions, the existence of a minimizer under the different assumptions mentioned in \cref{ass:optimal_control}.
\begin{theorem}[existence of a minimizer of \cref{defi:optimal_control_problem}]\label{Thm:Ex}
    There exists at least one solution $(q_0^\ast, V^\ast, \Gamma^\ast) \in \mQ \times \mV \times \Gamma$ to the problem~\eqref{Eq:O} in \cref{defi:optimal_control_problem} given any of the cases in \cref{ass:optimal_control}.
\end{theorem}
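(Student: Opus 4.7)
The plan is to invoke the direct method of the calculus of variations: extract a minimizing sequence, use the built-in compactness of the admissible sets to obtain limits, and verify that those limits are admissible and attain the infimum. Fix any minimizing sequence $(q_{0,k}, V_k, \gamma_k)_{k \in \N} \subseteq \mQ \times \mV \times \Gamma$. The sets $\mV$ and $\Gamma$ are already handled in all three cases: Arzelà--Ascoli yields a $C([0, q_{\max}])$-limit $V^\ast \in \mV$ (and in case~\cref{item:3}, by upgrading the compactness argument to the derivative, even a $W^{1,\infty}([0, q_{\max}])$-limit), and Riesz--Kolmogorov, combined with the pointwise tail bound $\gamma(x) \leq x^{-(1+\eps)}$ for $x \geq \bar{R}$ and the local $TV$ control coming from monotonicity, yields an $L^1(\R_{\geq 0})$-limit $\gamma^\ast \in \Gamma$. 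The remaining difficulty is the initial datum, which I would treat case by case.

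For case~\cref{item:1}, the coercivity assumption~\cref{eq:coer}, combined with boundedness of the minimizing values of $J$, forces a uniform $L^2(\R)$-bound on $q_k(s, \cdot)$; via the characteristic representation in \cref{thm:eub} and the uniform Lipschitz bounds on $\xi_k$ this transfers to an $L^2(\R)$-bound on $q_{0,k}$. Banach--Alaoglu then produces the weak $L^2$ and weak\,$*$ $L^\infty$ limit $q_0^\ast \in \mQ$, and \cref{Lem:ccar} delivers uniform convergence of $W_k$ to $\hat{W} = W[q^\ast]$ and of the characteristics $\xi_k$ to $\hat{\xi}$. The limit state $q^\ast(t, x) = q_0^\ast(\hat{\xi}(t, x; 0)) \partial_2 \hat{\xi}(t, x; 0)$ is identified as the weak-$L^2$ limit of $q_k(t, \cdot)$ for every $t$ (test against $\phi \in C_c^\infty(\R)$, change variables $z = \xi_k(t, \cdot; 0)$, and use uniform convergence of the characteristics together with weak convergence of $q_{0,k}$). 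Weak $H^1$-convergence of $W_k(t, \cdot)$, uniformly in $t$, follows from Young's inequality $\|W_k(t, \cdot)\|_{L^2(\R)} \leq \|\gamma_k\|_{L^1(\R_{\geq 0})} \|q_k(t, \cdot)\|_{L^2(\R)}$ combined with the spatial-derivative bound from \cref{lem:ubw} and the already-established uniform pointwise convergence. The semicontinuity assumption~\cref{eq:lsem} closes the argument.

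For cases~\cref{item:2} and~\cref{item:3}, the $TV$ bound on $\mQ$ triggers Helly's selection theorem, giving a.e.\ and hence $L^1_{\text{loc}}(\R)$ convergence of $q_{0,k}$ (with weak\,$*$ $L^\infty$ for free via Banach--Alaoglu). In case~\cref{item:2}, replicating the argument of \cref{Lem:ccar} --- the only use there of the weak\,$*$ convergence of $q_{0,k}$ was in controlling the term~\cref{eq:fc4} --- again yields uniform convergence of $W_k$, and since $J$ is independent of $q$, continuity of $J$ in the remaining arguments concludes. In case~\cref{item:3}, the assumed compactness of $\mQ$ upgrades the convergence to strong $L^1(\R)$, and \cref{thm:stab}, whose applicability rests precisely on the $W^{2,\infty}$ admissibility of $V$, yields strong $C([0, T]; L^1(\R))$ convergence of $q_k$; then, using $\partial_x W = -\gamma(0) q - \int_0^\infty \gamma'(z) q(t, \cdot + z) \dd z$ in the distributional sense together with the uniform $TV$ estimate~\cref{eq:TV_bounds}, strong $C([0, T]; W^{1, 1}(\R))$ convergence of $W_k$ follows, and continuity of $J$ closes the argument.

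The principal obstacle is case~\cref{item:1}: only weak $L^2$ convergence of the states is available, yet one must pass to the limit in the nonlinear flux $V(W[q])q$. The saving grace is the compactifying effect of the convolution, quantified in \cref{Lem:ccar}: weak convergence of $q_{0,k}$ upgrades to uniform convergence of $W_k$, and thus of $V_k(W_k)$, which is exactly what is needed to close the limit passage in the transport form of the equation written via characteristics.
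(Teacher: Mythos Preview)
Your proposal is correct and follows essentially the same route as the paper: direct method on a minimizing sequence, Arzel\`a--Ascoli for $V_k$, Riesz--Kolmogorov for $\gamma_k$, and then the case split---weak $L^2$ convergence of states via the characteristic formula and \cref{Lem:ccar} in case~\ref{item:1}, Helly plus \cref{Lem:ccar} in case~\ref{item:2}, and the strong stability estimate \cref{thm:stab} in case~\ref{item:3}. The only cosmetic differences are that the paper tests directly against arbitrary $g\in L^2(\R)$ rather than $C_c^\infty$ in case~\ref{item:1}, and in case~\ref{item:3} it handles the derivative of $W_k$ via Young's convolution inequality rather than your explicit $\partial_x W$ formula together with the $TV$ bound; both variants work.
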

\begin{proof}
    Consider a minimizing sequence $\big(q_{0, k}, V_k, \gamma_k \big)_{k \in \N} \subseteq \mQ \times \mV \times\Gamma$.
    We set
    \[
    q_k \eqqcolon q[q_{0, k}, V_k, \gamma_k], \quad W_k \eqqcolon W[q_k], \quad q^* \eqqcolon q[q_{0}^*, V^*, \gamma^*], \quad W ^* \eqqcolon W[q^*].
    \]
    We then distinguish the three different cases:
    \begin{description}
        \item[\cref{ass:optimal_control}, \cref{item:1}:] Thanks to the coercivity condition~\cref{eq:coer} on $J$, the sequence $(q_{0,k})_{k \in \mathbb{N}}$  is uniformly bounded $L^2(\R)$, exploiting the solution from formula~\cref{thm:eub}.
    Then, like in~\cref{Lem:ccar}, there exists a subsequence and some $(q_0^\ast, V^\ast, \Gamma^\ast) \in \mQ \times \mV \times \Gamma$ such that
    \begin{itemize}
        \item $q_{0, k} \xrightharpoonup{k\rightarrow\infty} q_0^\ast \text{ in }L^2(\R)$ and $q_{0, k} \xrightharpoonup[\star]{k\rightarrow\infty} q_0^\ast$ in $L^\infty(\R)$,
        \item $V_k \overset{k \rightarrow \infty}{\longrightarrow} V^\ast \text{ in }C\big([0, q_{\max}] \big)$, and
        \item $\gamma_{k} \overset{k \rightarrow \infty}{\longrightarrow} \gamma^\ast\text{ in }L^1\big(\R_{\geq 0}\big)$.
    \end{itemize}
    Let us also use the abbreviations in~\cref{eq:abb} again.
    We want to prove that $q_{k}(t, \cdot)\overset{k \rightarrow \infty}{\rightharpoonup}q^\ast(t, \cdot)$ in $L^2(\R)$ and $W_k(t, \cdot) \rightarrow W^*(t, \cdot)$ in $H^1(\R)$ for all $t>0$, which will be sufficient for existence because of the weak lower semicontinuity of the objective functional. Here, $q^\ast$ denotes the solution to the nonlocal conservation law with velocity $V^\ast$, initial datum $q_0^\ast$, and kernel $\gamma^\ast$. 
    Now, we prove that the solution to \(q_{k}\) with initial datum \(q_{0,k},\) velocity \(V_{k}\), and nonlocal kernel \(\gamma_{k}\) converges weakly in \(L^{2}\) to the solution \(q^{*}\) with initial datum \(q_{0}^{*},\) velocity \(V^{*}\), and nonlocal kernel \(\gamma^{*}\). To this end, choose $g \in L^2(\R)$ and \(t\in[0,T]\) fixed, and consider
    \begin{align}
        & \left \lvert \int_{\R} g(x) \big( q_{k}(t, x) - q^{*}(t, x) \big)\dd x \right \rvert \notag\\
        & = \bigg \lvert \int_{\R}g(x)\big(q_{0, k}(\xi_{W_k}(t, x; 0))\partial_2\xi_{W_k}(t, x; 0) \notag  \\ 
        &\qquad\qquad\qquad  - q_0^\ast(\xi_{\hat{W}}(t, x; 0))\partial_2\xi_{\hat{W}}(t, x; 0) \big) \dd x \bigg \rvert. \notag
        \intertext{Substitute $y = \xi_{W_k}(t, x; 0)$ and $y = \xi_{\hat{W}}(t, x; 0)$, respectively:}
        & = \left \lvert \int_{\R}g\big(\xi_{W_k}(0, y; t)\big)q_{0, k}(y) - g\big(\xi_{\hat{W}}(0, y; t)\big)q_0^\ast(y)\dd y\right \rvert.\notag
        \intertext{Add a zero:}
        & \leq \left \lvert\int_{\R}g\big(\xi_{W_k}(0, y; t)\big)q_{0, k}(y) - g\big(\xi_{\hat{W}}(0, y; t)\big)q_{0, k}(y)\dd y \right \rvert \notag\\
        & \quad +  \left \lvert \int_{\R}g\big( \xi_{\hat{W}}(0, y; t)\big)q_{0, k}(y) - g\big(\xi_{\hat{W}}(0, y; t)\big)q^\ast_{0}(y)\dd y \right \rvert \notag\\
        & \leq \big\|g\big(\xi_{W_k}(0, \cdot; t)\big)-g\big(\xi_{\hat{W}}(0, \cdot; t)\big)\big\|_{L^{2}(\R)}\|q_{0, k}\|_{L^{2}(\R)}\label{eq:g_xi}\\
        & \quad +  \left \lvert \int_{\R}g\big( \xi_{\hat{W}}(0, y; t)\big)q_{0, k}(y) - g\big(\xi_{\hat{W}}(0, y; t)\big)q^\ast_{0}(y)\dd y \right \rvert. \label{eq:wk}
    \end{align}
    Now, weak convergence in $L^2$ ensures that~\cref{eq:wk} tends to $0$ as $k \rightarrow \infty$ and \cref{eq:g_xi} converges to zero as \(k\rightarrow\infty\) because of the uniform convergence of \(\xi_{W_{k}}\), as outlined in \cref{Lem:ccar}.
    Now, we still have to show the weak convergence of $W_k$ to $W^*$ in $H^1(\R)$. It is sufficient to prove that such a weak limit exists, as we know from the aforementioned uniform convergence of $W_k$ to $W^*$ that the limit will be $W^*$. We do this by showing that $\big(W_k(t, \cdot)\big)_{k \in \mathbb{N}}$ and $\big(\partial_x W_k(t, \cdot)\big)_{k \in \mathbb{N}}$ are both uniformly bounded in $L^2$ and applying Banach--Alaoglu~\cite[8.10]{alt2016eng}.
    The argument is almost identical for both sequences, so we only prove it for $W_k$. Let $k \in \mathbb{N}$ and $t \in (0, T)$ be arbitrary. Then, we can infer by using Young's convolution inequality~\cite[4.13 (2)]{alt2016eng} that
    \begin{align*}
        \| W_k(t, \cdot) \|^2_{L^2(\R)} &  \leq \int_{\R} \left( \int_{\R}\! \left | \gamma_k(y-x)q_k(t, y) \right|\dd y \right)^{2} \! \leq  \lVert \gamma_k \rVert_{L^1(\R)}^2 \lVert q_k(t, \cdot) \rVert_{L^2(\R)}^2,
    \end{align*}
    $q_k$ is uniformly bounded in $L^2$ due to its weak convergence~\cite[8.3 Remarks, (5)]{alt2016eng}, and $\gamma_k$ is uniformly bounded in $L^1$ due to the definition of $\Gamma$. For $\partial_x W_k$, the total variation bounds from the definition of $\Gamma$ will have to be used.
    Finally, we conclude the argument by using the inequality~\cref{eq:lsem} for the minimal sequence.
    \item[\cref{ass:optimal_control}, \cref{item:2}:] We may then extract a minimal sequence (denoted by the same index) such that there is a $(q_0^*, V^*, \Gamma^*)$ satisfying
    \begin{itemize}
        \item $q_{0, k} \overset{k \rightarrow \infty}{\longrightarrow} q_0^*$ in $L^1_{\text{loc}}(\R)$ (\cite[Theorem~5.5]{evans}) and  $q_{0, k} \xrightharpoonup[*]{k \rightarrow \infty} q_0^\ast \text{ in }L^\infty(\R)$,
        \item $V_k \overset{k \rightarrow \infty}{\longrightarrow} V^*$ in $C\big([0, q_{\max}]\big)$,
        \item $\gamma_k \overset{k \rightarrow \infty}{\longrightarrow} \gamma^*$ in $L^1(\R)$.
    \end{itemize}
    Then, note the uniform convergence of $W_k$ to $W^*$ due to~\cref{Lem:ccar}, which implies convergence in $L_{\text{loc}}^1$.
    Then, continuity of the objective functional in said spaces finally yields optimality, similar to the first case.
    \item[\cref{ass:optimal_control},\ \cref{item:3}:] In this case, one can argue as in the case for \cref{item:2}, this time using the compactness of $(q_{0, k})$ in $L^1(\R)$ and the stability result~\cref{thm:stab} to infer convergence of $q_k$ to $q^*$ in $L^1(\R)$. To use the continuity of the objective function, it is still necessary to prove that $W_k \rightarrow W$ in $H^1$. However, this can be shown directly by estimating $\lVert W_k - W^* \rVert_{H^1(\R)}$ directly by again using Young's convolution inequality and adding a zero to deal with the difference in $\gamma_k$ and $\gamma^*$. 
    \end{description}
    Therefore, $\big(q_0^\ast, V^\ast, \gamma^\ast \big)\in\mQ\times\mV\times\Gamma$ is a solution to our optimization problem. 
\end{proof}

\section{Singular limit for the optimal control problem}\label{sec:singular_limit}
As mentioned in the introduction, we now look into the singular limit behavior of solutions to the optimal control problem in \cref{defi:optimal_control_problem}, that is, the behavior of optimizers when the nonlocal kernel, here the exponential kernel, tends to a Dirac distribution. The goal is to show convergence of the optimizer found in~\cref{Thm:Ex} in some sense to an optimizer of the local optimal control problem.

We will leave out an optimization with respect to the nonlocal kernel $\gamma$, as this will vanish when the kernel tends to a Dirac distribution. We will conduct this analysis for the choice of the exponential kernel~\cite{CocliteSL2022} only, as this is more compatible with the singular limit results. We will later in this section lay out ideas for generalizations. 
We first define the local optimal control problem.
\begin{definition}[local optimal control problem]\label{def:locp}
We call the optimal control problem
\begin{equation}\label{eq:opq1}
\begin{aligned}
      \min_{\substack{q_0 \in \hat{\mQ}\\ V \in \mV}} &  J(q[q_0, V], q_0, V)\\
        \text{where \(q[q_{0},V]\) denotes the local entropy}& \text{ solution of} \\
        \partial_t q(t, x) + \partial_x (V(q)q) &= 0, &&\text{ on }(0, T) \times \R,\\
        q_\eta(0, \cdot)& \equiv q_0, &&\text{ on } \R
\end{aligned}    
\end{equation}
with \(\hat{\mQ} \subseteq \mQ \) yet to be stated, \(\mQ \) and \(\mV\) as in \cref{ass:optimal_control}, and \(I\subset\R\) open and bounded, the \emph{local} optimal control problem.
\end{definition}
%\begin{lemma}[existence of optimizers for the local optimal control problem]
    
%\end{lemma}
\begin{definition}[the nonlocal optimal control problem in the singular limit context]
We call the following problem the 
\emph{nonlocal optimal control problem} in the singular limit context:
\begin{equation}\label{eq:opq}
    \begin{aligned}
        \min_{\substack{q_0 \in \hat{\mQ} \\ V \in \mV}}  J\big(W_\eta[q_\eta[q_0, V]], q_0, V\big)&\\
        \text{s.t.\ the weak solution of}&\\
        \partial_t q_\eta + \partial_x \big(V\left(W_\eta[q_{\eta}[q_0, V]](t, x)\right)q_\eta \big) &= 0, &&\text{ in }(0, T) \times \R, \\
        q_\eta(0, \cdot) &\equiv q_0, &&\text{ on }x\in\R,
    \end{aligned}
\end{equation}
for $J:L^1_{\textnormal{loc}}((0, T) \times \mathbb{R}) \times L^1_\textnormal{loc}(\mathbb{R}) \times C\big([0, q_{\max}]\big) \rightarrow \mathbb{R}$ with
\[
W_{\eta}[q_{\eta}[q_0, V]](t,x)\coloneqq \tfrac{1}{\eta}\int_{x}^{\infty} \gamma\big(\tfrac{y-x}{\eta}\big) q(t, y)\dd y \quad \forall (t, x) \in (0, T) \times \R
\]
being the nonlocal operator. Here, $\hat{\mathcal{Q}}$ is as in~\cref{def:locp}, and \(\mV\) is as in~\cref{ass:optimal_control}. 
\end{definition}

With the problem stated, we want to justify the assumptions that will be made in~\cref{theo:singular_limit}.
\begin{remark}[assumptions in singular limit process]
    In the ensuing~\cref{theo:singular_limit}, we will have to operate under~\cref{ass:optimal_control},~\cref{item:2}, as the crucial singular limit error estimate~\cite[Theorem~1.3]{Colombo2023} used in the theorem is about convergences of $W_\eta$ and not $q_\eta$. Moreover, it requires total variation bounds on the initial datum. If they are uniform, they lead to compactness of a sequence of initial data (which is crucial for the singular limit result below) in $L_{\text{loc}}^1(\R)$ but not in $L^1(\R)$. Therefore, to avoid excessive assumptions on the initial datum, we will only be considering tracking functionals on bounded intervals. This essentially amounts to~\cref{ass:optimal_control},~\cref{item:2}.
\end{remark}

\begin{theorem}[singular limit]\label{theo:singular_limit}
    Let~\cref{ass:optimal_control},~\cref{item:2} hold and let $\gamma \in \Gamma$ ($\Gamma$ as in~\cref{ass:optimal_control}) be convex such that $x\mapsto x\gamma(x)$ is in $L^1(\mathbb{R})$.
    For $\eta \in \R_{>0}$, let $\big( q_{0, \eta}^\ast, V^\ast_{\eta} \big)$ denote an optimal solution to the problem~\cref{eq:opq}. Then, any accumulation point $\big( q_0^\ast, V^\ast \big)$ of the sequence $\big( q_{0, \eta}^\ast, V^\ast_{\eta} \big)_{\eta > 0} \subseteq \hat{\mQ} \times \hat{V}$ with respect to the strong $L^1_{\textnormal{loc}}(\R) \times C(\mathbb{R})$ topology is a solution (i.e., optimizer) for the local optimal control problem in~\cref{def:locp}.
\end{theorem}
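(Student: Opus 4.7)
The plan is a standard $\Gamma$-convergence-style argument: extract a convergent subsequence from the family of optimizers, pass to the singular limit along it to get a liminf-type inequality for $J$, and compare with the constant sequence built from any competitor of the local problem. Along a subsequence $\eta_k\downarrow 0$ realizing the accumulation point, $q^*_{0,\eta_k}\to q_0^*$ in $L^1_{\textnormal{loc}}(\R)$ and $V^*_{\eta_k}\to V^*$ uniformly on $[0,q_{\max}]$. The uniform $L^\infty$ bound $q_{\max}$ and $\mathrm{TV}$ bound $\mathcal{C}_{\mathcal{Q}}$ on the optimal initial data transfer to $q_0^*$ by lower semicontinuity of the total variation together with a.e.\ convergence along a further subsequence; monotonicity, non-negativity and the $W^{1,\infty}$-bound $\mathcal{C}_{\mathcal{V}}$ survive in $V^*$ by Arzel\`a--Ascoli and weak-$*$ closedness. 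Hence $(q_0^*,V^*)\in\hat{\mathcal{Q}}\times\mathcal{V}$.

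Next, I establish the key convergence
\[
W_{\eta_k}\!\bigl[q_{\eta_k}[q^*_{0,\eta_k},V^*_{\eta_k}]\bigr] \;\xrightarrow{k\to\infty}\; q[q_0^*,V^*] \qquad \text{in } L^1_{\textnormal{loc}}\big((0,T)\times\R\big),
\]
where $q[q_0^*,V^*]$ denotes the Kru\v{z}kov entropy solution of \cref{eq:ocov}. I decompose the error as
\[
\bigl(W_{\eta_k}[q_{\eta_k}[q^*_{0,\eta_k},V^*_{\eta_k}]] - q[q^*_{0,\eta_k},V^*_{\eta_k}]\bigr) \;+\; \bigl(q[q^*_{0,\eta_k},V^*_{\eta_k}]-q[q_0^*,V^*]\bigr).
\]
The first summand vanishes by the singular-limit error estimate of \cite[Thm.~1.3]{Colombo2023}, whose constants---under the hypotheses on $\gamma$ (exponential, convex, with $x\gamma(x)\in L^1$) and the uniform bounds on $(q_0,V)$ from \cref{ass:optimal_control} \cref{item:2}---depend on the data only through those uniform bounds and thus yield a rate uniform in $k$. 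The second summand vanishes by Kru\v{z}kov's $L^1_{\textnormal{loc}}$-stability of entropy solutions with respect to both initial datum (in $L^1_{\textnormal{loc}}$) and flux (in $W^{1,\infty}$ on $[0,q_{\max}]$), both convergences being provided by Step~1.

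For the optimality, let $(\tilde{q}_0,\tilde{V})\in\hat{\mathcal{Q}}\times\mathcal{V}$ be arbitrary. The \emph{constant} sequence is itself an admissible recovery sequence: the classical singular limit applied to the fixed pair gives $W_{\eta_k}[q_{\eta_k}[\tilde{q}_0,\tilde{V}]]\to q[\tilde{q}_0,\tilde{V}]$ in $L^1_{\textnormal{loc}}$. Exploiting that $J$ in the regime of \cref{ass:optimal_control} \cref{item:2} is independent of its first argument and continuous in the remaining ones in the stated product topology, together with optimality of $(q^*_{0,\eta_k},V^*_{\eta_k})$ for \cref{eq:opq}, I obtain
\[
\begin{aligned}
J\bigl(q[q_0^*,V^*], q_0^*,V^*\bigr) &= \lim_{k} J\bigl(W_{\eta_k}[q_{\eta_k}[q^*_{0,\eta_k},V^*_{\eta_k}]], q^*_{0,\eta_k}, V^*_{\eta_k}\bigr) \\
&\leq \lim_{k} J\bigl(W_{\eta_k}[q_{\eta_k}[\tilde{q}_0,\tilde{V}]], \tilde{q}_0, \tilde{V}\bigr) = J\bigl(q[\tilde{q}_0,\tilde{V}], \tilde{q}_0, \tilde{V}\bigr),
\end{aligned}
\]
so $(q_0^*,V^*)$ minimizes the local problem of \cref{def:locp}.

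The main technical obstacle is the first half of the singular-limit decomposition in the second step: one must verify that the constants in \cite[Thm.~1.3]{Colombo2023} depend on the pair $(q_0,V)$ only through the uniform bounds guaranteed by \cref{ass:optimal_control} \cref{item:2}, so that the rate is genuinely uniform along the varying sequence $(q^*_{0,\eta_k},V^*_{\eta_k})$. This is exactly the regime in which the convexity of $\gamma$ and the moment condition $x\gamma(x)\in L^1(\R)$ are used. Once this uniformity is secured, the subsequent handoff to Kru\v{z}kov stability and the comparison with the constant recovery sequence are routine.
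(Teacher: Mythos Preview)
Your argument is essentially identical to the paper's: the same two-term decomposition using \cite[Thm.~1.3]{Colombo2023} for the nonlocal-to-local error and Kru\v{z}kov-type $L^1_{\textnormal{loc}}$ stability for the local problem (the paper cites \cite[Theorem~3.1]{Bouchut1998}), followed by comparison against the constant competitor sequence and continuity of $J$. One slip in wording: in the three-argument $J$ of the singular-limit setup the first slot holds $W_\eta$ (respectively $q$), so $J$ is \emph{not} independent of it---but your displayed chain already uses continuity in that slot correctly, so the logic is fine.
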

\begin{proof}
    Let $\big( q_0^\ast, V^\ast\big)$ be an accumulation point of $\big(q_{0, \eta}^\ast, V_\eta^\ast  \big)_{\eta > 0}$ in the described sense. We again denote the associated subsequence by $\big(q_{0, \eta}^\ast, V_\eta^\ast \big)_{\eta > 0}$. Moreover, we need to denote the explicit dependencies of $w_\eta$, $q_\eta$, and $q$:
    \[
    q_\eta = q_\eta[q_0, V],\quad W_\eta =  W_\eta [q_\eta[q_0, V]], \quad q  = q[q_0, V]. 
    \]
    Here, for any $(q_0, V) \in \hat{\mQ} \times \mV$, $q[q_0, V]$ denotes the entropy solution to the local conservation law and $q_\eta[q_0, V]$ that to the nonlocal conservation law.
    First, we want to prove that 
    \[
    \lim_{\eta \rightarrow 0}\|  W_\eta [q_\eta[q_{0, \eta}^*, V_\eta^*]]  - q[q_0^*, V^* ] \|_{L^1(I)} = 0
    \]
    for any bounded interval $I \subseteq \mathbb{R}$. This is being done by adding a zero ($\eta > 0$, $t \in (0, T)$, and $I \subseteq \mathbb{R}$ is a bounded interval):
    \begin{align}
        & \left \| W_\eta [q_\eta[q_{0, \eta}^*, V_\eta^*]](t, \cdot) - q[q_0^*, V^*](t, \cdot) \right\|_{L^1(I)} \notag  \\
        & \leq \| W_\eta[q_\eta[q_{0, \eta}^*, V_\eta^*]](t, \cdot) - q[q_{0, \eta}^*, V_\eta^*](t, \cdot) \|_{L^1(I)} \label{eq:slo1}\\
        & \quad + \| q[q_{0, \eta}^*, V_\eta^*](t, \cdot) - q[q_{0}^*, V^*](t, \cdot) \|_{L^1(I)}. \label{eq:slo2}\\
        \intertext{We apply the singular limit error estimate~\cite[Theorem~1.3]{Colombo2023} and the local stability estimate~\cite[Theorem~3.1]{Bouchut1998} (for an appropriate constant $C>0$ and $x_m \coloneqq \tfrac{1}{2}\left(\sup I + \inf I \right)$, $R \coloneqq \tfrac{1}{2}\mathrm{diam}(I)$):}
        & \leq C\big( \eta + \sqrt{\eta T}\big)\mC_{\hat{\mQ}} + \| q_{0, \eta}^* - q_0^* \|_{L^1([x_m - R-\mC_{\mV}T, x_m + R + \mC_{\mV}T])} \notag \\
        & \quad + C \big( 2(R + \mC_{\mV}T) \mC_{\hat{\mQ}} T \| V_\eta^* - V^* \|_{C([0, q_{\max}])} \big)^{\frac{1}{2}}.\notag  \\
        \intertext{Note that none of the constants depend on $\eta$, and $(q_0^*, V^*)$ is an accumulation point, whence we can conclude convergence as $\eta \rightarrow 0$:}
        & \overset{\eta \rightarrow 0}{\longrightarrow} 0 \notag .
    \end{align}
    Finally, let $(q_0, V) \in \hat{\mQ} \times \mV$ be arbitrary. Then, we can infer the following from the optimality of $(q_{0, \eta}^*, V_\eta^*)$ for every $\eta$, the singular limit~\cite[Theorem~1.3]{Colombo2023} (to be applied on the RHS), the continuity of $J$ (in $L^2_\text{loc}$ but thus also in $L^1_\text{loc}$ owing to Hölder's inequality), and the above limit:
    \begin{align*}
    & J\big(W_\eta[q_{0, \eta}^*, V_\eta^*], q_{0, \eta}^*, V_\eta^* \big)  \leq J\big(W_\eta[q_0, V_0],q_0, V_0\big)   \\
    \overset{\eta \rightarrow 0}{\implies} & J\big(q[q_0^*, V^*], q_{0}^*, V^* \big)  \leq J\big(q[q_0, V_0],q_0, V_0\big).
    \end{align*}
    The desired optimality follows (as $q$ is an entropy solution to the local conservation law).     
\end{proof}

We can offer some thoughts about the regularity of the data.
\begin{remark}[\(L^{1}_{\text{loc}}\), \(TV\) bounds]
It might look surprising that no coercivity (or similar properties) of the objective function with respect to the initial datum in some topology is required. This is because we have assumed a uniform \(TV\) bound on \(\mQ\), which gives---due to the given uniform \(L^{\infty}\) bound---a minimizing sequence but also strong convergence in \(C([0,T];L^{p}_{\text{loc}}(\R))\) for all \(p\in[1,\infty)\). A penalization term of the total variation could also be put into the objective function, leading to the same result. However, from the perspective of optimal control, such a penalization is not easy to handle, and one could imagine replacing it with an even stronger norm.
\end{remark}

\begin{remark}[generalizations to other kernels and singular limit convergence for objectives involving the nonlocal solution]
The previous statement can be generalized as follows: If one uses the exponential kernel \cite{CocliteSL2022} or the kernel class of fixed-support (which are monotone as in \cite[Assumption~2]{Keimer2025}), one also has the strong convergence of \(q_{\eta}\) toward the local entropy solution in \(C([0,T];L^{1}_{\text{loc}}(\R))\), enabling the singular limit to be considered in optimal control problems for objective functions of the given class when replacing the nonlocal term \(W_{\eta}[q_{\eta}]\) by \(q_{\eta}\), after switching to~\cref{ass:optimal_control}~\cref{item:3}. 
Not so for the class of convex kernels, as stated in \cref{theo:singular_limit}. Here, the convergence of \(q_{\eta}\) toward the local entropy solution is so far only weak\(*\) in \(L^{\infty}\), so we would need to assume that the objective function is continuous with respect to the named weak\(*\) convergence in \(q_{\eta}\).
\end{remark}
\begin{remark}[existence of optimizers for the local optimal control problem in \cref{def:locp}]
Notice that the singular limit convergence obtained in \cref{theo:singular_limit} also provides the existence of an optimizer of the local optimal control problem in \cref{def:locp}, given that~\cref{ass:optimal_control}~\cref{item:2} (or~\cref{item:3} in the case of the foregoing remark) holds and thus ensures the well-posedness of the optimal control problem.
\end{remark}

\section{Examples and numerical studies}\label{sec:examples_numerics}
To validate our results numerically, we consider several instances of the problem specified in
\begin{definition}[Optimal control problem in the numerical setup]
 \begin{equation}\label{eq:Numpr}
\begin{aligned}
      \min_{q_0 \in \mQ}  & J_i(q_\eta(t, x), q_0) \\
        \mathrm{s.t.~}  \partial_t q_\eta(t, x) + \partial_x \left(q_\eta(t, x) V\left(W_\eta(t, x) \right) \right) & = 0 \text{ in }(0, T) \times \R \\
        q_\eta(0, \cdot) & = q_0
\end{aligned}
\end{equation}
for $(q, q_0) \in L^2\big((0, T); L^2(\R)\big) \times L^2(\R)$ $i \in \lbrace 1, 2 \rbrace$ and
\[
W_\eta(t, x) \coloneqq \int^\infty_x \tfrac{1}{\eta} \exp\left( \tfrac{x-y}{\eta}\right) q(t,y)~\mathrm{d}y
\]
for all $(t, x) \in (0, T) \times \R$. Here, either
\begin{align*}
J_1(q, q_0) & \coloneqq \int_{I} \left \lvert \int_x^\infty \tfrac{1}{\eta}\exp\left(\tfrac{x-y}{\eta} \right)\left( q_\eta(T, y) -  q_d(y) \right)~\dd y \right \rvert^2 \dd x \text{ or }\\
J_2(q, q_0) & \coloneqq \int_{I} \left \lvert q(t, x) - q_d(x) \right\rvert^2 \dd x 
\end{align*}
for all $(q, q_0) \in L^2\big((0, T) \times \R\big) \times L^2(\R)$, which suits~\cref{ass:optimal_control},~\cref{item:1}. 
\end{definition}
The remaining new variables will be clarified in the following.
\begin{assumption}[general assumptions on data]\label{as:gendata}
    We assume that $\eta \in \R_{>0}$, $q_{\max} \in \lbrace 1, 4 \rbrace$, and $T = \tfrac{1}{2}$, $I = [-0.6, 1.6]$, $\Delta x = 0.0025$ $V(x) \coloneqq 1-x$ for all $x \in \R$ and the tracking goal is either one of the following mappings $q_{d, i}:\mathbb{R} \rightarrow \mathbb{R}$ for $i \in \lbrace 1, 2, 3 \rbrace$: We set $q_{d, 1}(x) \coloneqq \chi_{[0, 1]}(x)$ for all $x \in \mathbb{R}$, $q_{d, 2}:= q\left(\tfrac{1}{2}, \cdot \right)$, where $q$ is the solution to 
\[
\partial_t q(t, x) + \partial_x\left(q(t, x)V\left(\tfrac{1}{\eta}\int_x^\infty \exp\left(\tfrac{y-x}{\eta} \right)q(t, y) ~\mathrm{d}y\right)  \right) = 0, \quad q(0, \cdot) = q_0,
\]
with $q(0, x) = \chi_{[0, 1]}(x)$ for all $x \in \mathbb{R}$ and $q_{d, 3}(x) := (1-x)\chi_{[0, 1]}(x)$ for all $x \in \R$.  
\end{assumption}

\subsection{Discretization and discrete adjoint}
To solve~\eqref{eq:Numpr}, it is helpful to know the derivative of the simplified objective functional $q_0 \mapsto J(q[q_0], q_0)$, where $q[q_0]$ is the solution to the nonlocal equation described in the constraints of~\eqref{eq:Numpr} with $q[q_0](0, \cdot) = q_0$. 
However, rather than using the adjoint of the continuous problem and discretizing it afterward, we discretize the problem~\eqref{eq:Numpr} first and then compute an already discrete derivative. This has the advantage of both not requiring us to decide the numerical scheme used for possible adjoint equations and having access to an \emph{exact} gradient. 
\begin{assumption}[assumptions on discretization]\label{as:asd}
We divide the ``enlarged'' interval $\mathcal{I} \coloneqq \left[-0.6-\Delta x, 1.6 + \Delta x \right]$ into an equidistant grid $x_0, \dots, x_P$ for some $P \in \N$ with step length $\Delta x$ as in~\cref{as:gendata} in the following way:
\begin{align*}
-0.6 & =: y_0 < y_0 + \Delta x =: y_1 < \dots < y_{P+1} \coloneqq y_0 + (P+1)\Delta y = 1.6. \\
x_j & := \tfrac{1}{2}\big(y_{j}+y_{j+1}\big) \quad \forall j \in \lbrace 0, ..., P \rbrace
\end{align*}
Moreover, we set 
\[
\gamma_k \coloneqq \int_{k\Delta x}^{(k+1)\Delta x} \tfrac{1}{\eta} \exp\left(\tfrac{-x}{\eta} \right)~\dd x = \exp\left( -\tfrac{k \Delta x}{\eta} \right) - \exp\left(- \tfrac{(k+1) \Delta x }{\eta} \right)
\]
for all $k \in \N_0$. To discretize the time horizon, we fix a time step size $\Delta t \in \R_{>0}$ (to be specified later) and an appropriate $N \in \N$, similarly setting
\[
0 =: t_0 < t_0 + \Delta t =: t_1 < \dots < t_0 + N\Delta t \coloneqq t_N = \tfrac{1}{2}.
\]
The discretization of the governing nonlocal equation is conducted according to the following numerical scheme with Dirichlet boundary conditions~\cite[Definition~4.10]{Keimer2019}:
\begin{align}
    q^0_j &= q_0(x_j)\quad \forall j \in \lbrace 0, \dots, P\rbrace \label{eq:num1}\\
    q^n_0 & = q^n_P = 0 \quad \forall n \in \lbrace 1, \dots, P \rbrace \label{eq:num2}\\
    q^{n+1}_j &= q_j^n + \tfrac{\alpha \Delta t}{2 \Delta x}\big(q_{j-1}^n-2q_j^n + q_{j+1}^n \big) \notag  \\
    & \qquad + \tfrac{\Delta t}{2 \Delta x}\left(q^n_{j-1}V\left(\sum_{k=0}^{P-j+1} \gamma_k q_{j-1+k} \right)  - q^n_{j+1}V\left(\sum_{k=0}^{P-j-1} \gamma_k q_{j+1+k} \right) \right) \label{eq:num3} \\
    & \quad \forall j \in \lbrace 1, \dots, P-1\rbrace,\quad n\in \lbrace 0, \dots, N-1 \rbrace. \label{eq:num4}
\end{align}
Here, inspired by~\cite[Assumption~4.12]{Keimer2019}, we set
\[
\alpha \geq \lVert V \rVert_{L^\infty(0, q_{\max}+1)} + \lVert V' \rVert_{L^\infty(0, q_{\max}+1)} \Delta x \tfrac{1}{\eta} \big(q_{\max}+1\big)
\]
and the CFL condition as
\[
\Delta x \leq \tfrac{2}{2 \alpha + \lVert V' \rVert_{L^\infty(0, q_{\max}+1)} \Delta x \tfrac{1}{\eta} },
\]
where $q_{\max}$ is as in the definition of $\mQ$. We chose $q_{\max}+1$ instead of $q_{\max}$ so that we can ensure the stability of the scheme when operating close to the boundary but outside the admissible set.\\ 
Furthermore, after rearranging, we set $F_{\Delta}: \R^{(P+1)\cdot(N+1)} \times \R^{P+1} \rightarrow \R^{(N+1)\cdot (P+1)}$ such that $F_{\Delta}(q, q_0) =0$ if and only if $q$ adheres to~\cref{eq:num1,eq:num2,eq:num3,eq:num4}.
\end{assumption}

Using this, we consider the following discretized problem.
\begin{definition}[discretized optimization problem including control constraints]\label{def:optd}
We want to solve
\begin{align}
\min_{q_0 \in \R^{P+1}} & \quad G(q[q_0], q_0) \label{eq:constr0}\\
\mathrm{s.t.~} & F_{\Delta}(q[q_0], q_0) = 0\label{eq:constr}\\
& \quad 0\leq (q_0)_j \leq q_{\max} \quad \forall j \in \lbrace 0, \dots, P \rbrace  \label{eq:constr1}.
\end{align}
The discretized objective functional by $G :\R^{(P+1)\cdot(N+1)} \times \R^{P+1} \rightarrow \R$, where $G \in \lbrace J_{\Delta, W}, J_{\Delta, q} \rbrace$ is given by either
\begin{align}
    J_{\Delta, W} & :=  \tfrac{\Delta x}{2} \sum_{j=0}^P \left( \sum_{k=0}^{P-j} \gamma_k q^N_{j+k} - \sum_{k=0}^{P-j} \gamma_k q_d\big(x_{j+k}\big)\right)^2 \text{ or } \label{eq:JW}\\
    J_{\Delta, q} & :=  \tfrac{\Delta x}{2} \sum_{j=0}^P \left( q^N_j -q_d(x_j)\right)^2 \label{eq:Jq}
\end{align}
for all $(q, q_0) \in \R^{(P+1)\cdot(N+1)} \times \R^{P+1}$. Here, $F_{\Delta}$ is as in~\cref{as:asd}.
\end{definition}

For optimization, it is crucial to be able to compute the gradient $\nabla_{q_0} G\big(q[q_0], q_0\big)$ for admissible $q_0$ and $i \in \lbrace 1, 2 \rbrace$, where $q[q_0]$ is the vector in $\R^{(P+1)\cdot(N+1)}$ that arises from~\cref{eq:constr} for some given admissible $q_0$. This can be achieved via the adjoint method.
\begin{theorem}[gradient of the simplified objective function]
    Let $q_0$ be admissible for~\cref{eq:constr0,eq:constr,eq:constr1}, and let $q[q_0]$ denote the vector that is then given by the system~\cref{eq:constr}. We set
    \[
    S_{\ell, m, r, n} \coloneqq \sum_{k = \ell}^m \gamma_k q_{k+r}^n \quad \text{for all $\ell, m, r \in \lbrace 0,\dots, P \rbrace$}.
    \]
    Subsequently, it holds that $\nabla_{q_0}G \big(q[q_0], q_0\big)$ for $G \in \lbrace J_{\Delta, W}, J_{\Delta, q} \rbrace$ as specified in~\cref{def:optd} is given by $\big(p^0_j\big)_{j=0, \dots, P} \in \R^{P+1}$ for $i \in \lbrace W, q \rbrace$, which is iteratively computed by the ``discrete adjoint scheme'' (here $q:= q[q_0]$)
    \begin{align*}
        p^N_j & = \partial_{q_j^N} G(q, q_0) %\Delta x\sum_{k=0}^j \gamma_{\eta, j-k}\left( \sum_{\ell = 0}^{P-k}\gamma_{\eta, \ell} q_{\ell+k}^N  -  \sum_{\ell = 0}^{P-k} \gamma_{\eta, \ell}q_d(x_{\ell+k}) \right) \\
        \quad  \forall j \in \lbrace 0, \dots, P\rbrace \\
        p_0^n & = \tfrac{\alpha \Delta t}{2\Delta x}p^{n+1}_1 +\tfrac{\Delta t}{2\Delta x} V\left( S_{0, P, 0, n} \right)p^{n+1}_1 + \tfrac{\Delta t}{2 \Delta x} V'\left( S_{0, P, 0, n} \right)\gamma_0 q^n_0 p^{n+1}_1 \\
        & \forall n \in \lbrace 0, \dots, N-1 \rbrace \\
        p_1^n & = p_1^{n+1} + \tfrac{\alpha \Delta t}{2\Delta x}\big(-2p_1^{n+1} + p_2^{n+1} \big) + \tfrac{\Delta t}{2\Delta x}V\left(S_{0, P-1, 1, n} \right)p_2^{n+1}\\
        & \quad + \tfrac{\Delta t}{2\Delta x} \sum_{\ell = 0}^1 q^n_{\ell} V'\left(S_{0, P-\ell, \ell, n} \right) \gamma_{1-\ell} p^{n+1}_{\ell+1}\\
        &  \forall n \in \lbrace 0, \dots, N-1 \rbrace \\
        p_{P-1}^n & =  p_{P-1}^{n+1} + \tfrac{\alpha \Delta t}{2 \Delta x}\big(p_{P-2}^{n+1}-2p_{P-1}^{n+1}\big) - \tfrac{\Delta t}{2 \Delta x} V\left(S_{0, 1, P-1, n} \right)p_{P-2}^{n+1} \\
        & \quad + \tfrac{ \Delta t}{2 \Delta x}\bigg( \sum_{\ell=0}^{P-2} q_\ell^{n} V'\left(S_{0,P-\ell, \ell, n} \right) \gamma_{\eta, P-1-\ell} p_{\ell+1}^{n+1} \\
        & \qquad -  \sum_{\ell=0}^{P-3} q_{2+\ell}^{n} V'\left( S_{0, P-\ell-2, 2+\ell, n} \right) \gamma_{\eta, P-3-\ell} p_{\ell+1}^{n+1} \bigg) \\
        & \forall n \in \lbrace 0, \dots, N-1 \rbrace \\
        p_{P}^n & =  \tfrac{\alpha \Delta t}{2 \Delta x} p_{P-1}^{n+1} - \tfrac{\Delta t}{2 \Delta x} V\left(S_{0, 0, P, n}\right)p_{P-1}^{n+1} \\
        & \quad + \tfrac{ \Delta t}{2 \Delta x}\bigg( \sum_{\ell=0}^{P-2} q_\ell^{n} V'\left(S_{0, P-\ell, \ell, n} \right) \gamma_{\eta, P-\ell} p_{\ell+1}^{n+1} \\
        & \qquad -  \sum_{\ell=0}^{P-2} q_{2+\ell}^{n} V'\left(S_{0, P-\ell-2, 2+\ell, n}  \right) \gamma_{\eta, P-2-\ell} p_{\ell+1}^{n+1} \bigg) \\
        & \forall n \in \lbrace 0, \dots, N-1 \rbrace \\
        p_j^n & = p^{n+1}_j + \tfrac{\alpha \Delta t}{2 \Delta x} \big(p_{j-1}^{n+1} - 2p_j^{n+1} + p_{j+1}^{n+1} \big) + \tfrac{\Delta t}{2 \Delta x}  V\left( S_{0, P-j, j, n} \right)\big(p_{j+1}^{n+1} - p_{j-1}^{n+1} \big)  \\
        & \quad + \tfrac{\Delta t}{2 \Delta x}  \bigg( \sum_{\ell=0}^j q_\ell^{n} V'\left(S_{0, P-\ell, \ell, n} \right) \gamma_{\eta, j-\ell} p_{\ell+1}^{n+1} \\
        & \qquad -  \sum_{\ell=0}^{j-2} q_{2+\ell}^{n} V'\left(S_{0, P-\ell-2, 2+\ell, n}  \right) \gamma_{\eta, j-2-\ell} p_{\ell+1}^{n+1}\bigg) \\ 
        \\&   \forall n \in \lbrace 0, \dots, N-1 \rbrace,~j \in \lbrace 2, \dots, P-2 \rbrace. & 
    \end{align*}
\end{theorem}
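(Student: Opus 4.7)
The plan is to apply the standard discrete adjoint method. Introduce the Lagrangian
\[
\mathcal{L}(q, q_0, p) \coloneqq G(q, q_0) - \sum_{n=0}^{N}\sum_{j=0}^{P} p_j^n F_{\Delta, j}^n(q, q_0),
\]
where $F_{\Delta, j}^n$ is the component of $F_\Delta$ enforcing the initial condition \cref{eq:num1} at $n = 0$, the boundary values \cref{eq:num2} at $j \in \lbrace 0, P\rbrace$, and the update rule \cref{eq:num3} elsewhere. Since $F_\Delta(q[q_0], q_0) \equiv 0$ by construction, $G(q[q_0], q_0) = \mathcal{L}(q[q_0], q_0, p)$ for every choice of $p$, so
\[
\nabla_{q_0} G(q[q_0], q_0) \;=\; \partial_{q_0} \mathcal{L} \;+\; \sum_{n, j} \partial_{q_j^n} \mathcal{L} \cdot \partial_{q_0} q_j^n[q_0].
\]
Choosing $p$ to satisfy the \emph{discrete adjoint equation} $\partial_{q_j^n}\mathcal{L} = 0$ for all $(n,j)$ eliminates the (unknown) sensitivity $\partial_{q_0} q_j^n$ and leaves $\nabla_{q_0} G = \partial_{q_0}\mathcal{L}$.

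Next I would unfold the adjoint equation into a backward-in-time recursion. Because \cref{eq:num3} writes $q_i^{n+1}$ as a function of $q^n$ alone, $\partial_q F_\Delta$ is block-lower-bidiagonal in the time index, so its transpose is block-upper-bidiagonal and the system decouples layer by layer. At $n = N$ only the direct dependence of $G$ contributes, yielding the terminal condition $p_j^N = \partial_{q_j^N} G(q, q_0)$. For $n < N$, both admissible objectives $G \in \lbrace J_{\Delta, W}, J_{\Delta, q}\rbrace$ depend on $q$ only through $q^N$, so the recursion reduces to
\[
p_j^n \;=\; \sum_{i=0}^{P} \bigl(\partial_{q_j^n} q_i^{n+1}\bigr)\, p_i^{n+1}.
\]
At the final step, the identity $\partial_{(q_0)_j} F_{\Delta, j}^0 = -1$ together with $\partial_{q_0} G = 0$ gives $\partial_{q_0}\mathcal{L} = (p_j^0)_{j=0,\ldots,P}$, which is the claimed gradient.

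What remains is a mechanical (but lengthy) evaluation of $\partial_{q_j^n} q_i^{n+1}$ directly from \cref{eq:num3}. The entry $q_j^n$ enters $q_i^{n+1}$ through two distinct channels: (i) the Lax--Friedrichs stencil, which couples $q_j^n$ only to $p_{j-1}^{n+1}, p_j^{n+1}, p_{j+1}^{n+1}$ and yields the diffusive and convective contributions $\tfrac{\alpha\Delta t}{2\Delta x}(p_{j-1}^{n+1}-2p_j^{n+1}+p_{j+1}^{n+1})$ and $\tfrac{\Delta t}{2\Delta x}V(S_{0,P-j,j,n})(p_{j+1}^{n+1}-p_{j-1}^{n+1})$; and (ii) the occurrence of $q_j^n$ inside the convolutions $S_{0,P-\ell,\ell,n}$ appearing as the argument of $V$. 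Via the chain rule, channel (ii) produces weighted sums of $V'(S_{0,P-\ell,\ell,n})\gamma_{j-\ell} p_{\ell+1}^{n+1}$ (from the flux factor $q_{i-1}^n V(\cdot)$, so $\ell = i-1$) and $V'(S_{0,P-\ell-2,\ell+2,n})\gamma_{j-\ell-2}p_{\ell+1}^{n+1}$ (from the flux factor $q_{i+1}^n V(\cdot)$), summed over those $\ell$ for which $q_j^n$ actually appears in the corresponding convolution. Collecting (i) and (ii) reproduces the interior formula for $p_j^n$, $2 \leq j \leq P-2$.

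The main obstacle is the boundary bookkeeping. The Dirichlet condition $q_0^n = q_P^n = 0$ of \cref{eq:num2} and the one-sided support of the convolution $S_{0,P-\ell,\ell,n}$ force the two nested sums to truncate asymmetrically when $j \in \lbrace 0, 1, P-1, P\rbrace$, and the missing stencil neighbours at the endpoints simultaneously delete parts of the diffusive and convective contributions coming from channel (i). Each of the four boundary cases listed in the statement corresponds to exactly one such combination of truncations; verifying them is elementary but requires carefully tracking the index shifts $j-\ell$ versus $j-\ell-2$ near the endpoints, which is why the explicit formulas are so bulky.
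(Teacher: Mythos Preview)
Your proposal is correct and takes essentially the same approach as the paper: the paper's proof consists solely of a citation to the discrete adjoint formulae in \cite[section~II]{Fikl2016} and \cite[Section~3]{Giles2003}, and what you have written is precisely an unfolding of that standard Lagrangian/discrete-adjoint computation applied to the scheme \cref{eq:num1,eq:num2,eq:num3,eq:num4}.
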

\begin{proof}
    Apply the formulae in~\cite[section~II]{Fikl2016} or~\cite[Section~3]{Giles2003}.% we can obtain the formula
    %\[
    %\nabla_{q_0} G\big(q[q_0], q_0\big) = p^\top \big( -\nabla_2 F_\Delta\big(q[q_0], q_0\big) +\underbrace{\nabla_2 G\big(q[q_0], q_0\big)}_{=0}\big),
    %\]
    %where $p$ is given by %$D_1F_\Delta\big(q[q_0], q_0\big) ^\top p = \nabla_1 G\big(q[q_0], q_0 \big)$. After some lengthy computations, this amounts to exactly the desired formula.
\end{proof}

\subsection{Results}
To solve the discretized problem in~\cref{def:optd}, we apply a projected (remember the box constraint~\cref{eq:constr1}) gradient descent algorithm with Armijo step size rule~\cite[Section 2.3]{bertsekas}. \\
In the following, we determine whether the algorithm yields a reasonable solution and, if so, analyze its behavior as $\eta \rightarrow 0$. The initial guess for gradient descent is set to $0$.
\subsubsection{Solvability}
We are interested in whether the algorithm yields a good approximation of the optimal solution. 
For this, we will consider the two tracking goals $q_{d, 1}$ and $q_{d, 2}$ from~\cref{as:gendata} and $\eta = 0.01$. The results can be observed in~\cref{fig:lr}.
\begin{figure}[h!]
    \centering
    \includegraphics[clip,trim=0 23 3 0 0]{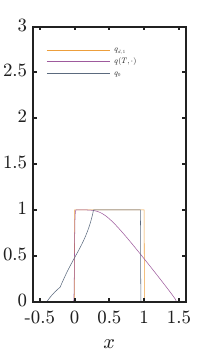}
    \includegraphics[clip,trim=15 23 3 0 0]{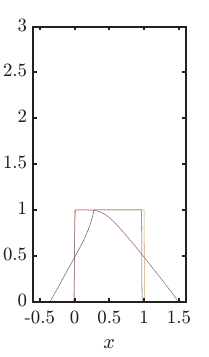}
    \includegraphics[clip,trim=15 23 3 0 0]{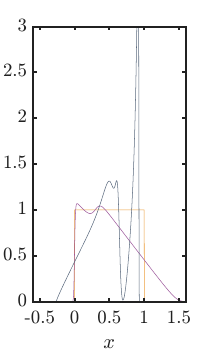}
    \includegraphics[clip,trim=15 23 3 0 0]{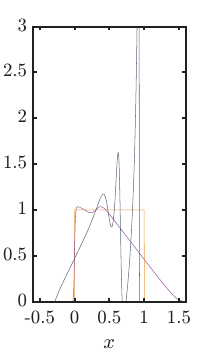} \\
    \includegraphics[clip,trim=0 23 3 0 0]{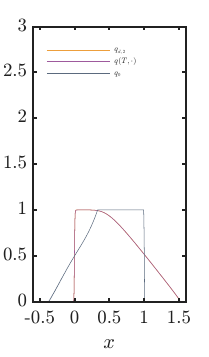}
    \includegraphics[clip,trim=15 23 3 0 0]{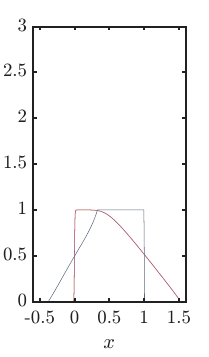}
    \includegraphics[clip,trim=15 23 3 0 0]{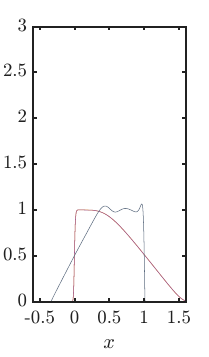}
    \includegraphics[clip,trim=15 23 3 0 0]{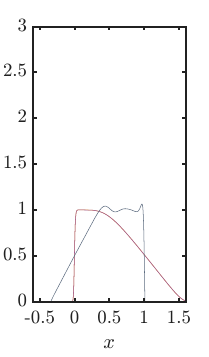}
    \caption{The optimal solution for $\eta = 0.01$ to~\cref{def:optd} $q_0$ found by gradient descent for $q_{d, 1}$ (top row) and $q_{d, 2}$ (bottom row) with $q_{\max} =1$ (first two columns), $J_{\Delta, q}$ as in~\cref{eq:Jq} (first and third column) and $q_{\max} = 4$ (last two columns) and $J_{\Delta, W}$ as in~\cref{eq:JW} (second and fourth column).}
    \label{fig:lr}
\end{figure}
We can see that in the case of $q_{d, 1}$, the desired end datum is not reached very well. This is by design: The smoothing effect on the ``wave front'' by the nonlocal conservation law (see, e.g.,~\cite[Fig.~2]{Friedrich2024}, which is not the same scenario but demonstrates the same effect) makes sure that a characteristic function cannot be achieved after $T = \tfrac{1}{2}$. In the case of $q_{\max} = 4$, the solution is a little closer to the tracking goal, as it is allowed to ``correct higher up''. In the top left picture, we can observe slightly more ``waviness'' in $q_0$. This is likely due to the numerical diffusivity of the PDE solver and ill-conditioning of the problem, caused by the ``artificially'' low bound $q_{\max}$. We hypothesize the nonlocal kernel to have a smoothening effect in the case of $J_{\Delta, W}$, which results in less waves. This can be observed even more in the next scenario.

The other goal $q_{d, 2}$ can be reached very accurately. This is not surprising, as it was chosen as the solution to a nonlocal conservation law. However, the $q_0$ that are obtained do not align with the actual $\chi_{[0, 1]}$ that was chosen to ``create'' $q_{d, 2}$, indicating that due to the closeness to the local optimization problem (cf.~\cref{sec:singular_limit}) and non-injectivity of its control to state map thereof~\cite[p.~8]{Ulbrich2003}, at least numerically, more solutions are possible. 

\subsubsection{Singular limit}
In this subsection, we want to check whether the singular limit result in~\cref{theo:singular_limit} can be validated numerically, that is, whether we can observe numerical convergence of the optimal $q_0$  to a solution to the ``local'' optimal control problem as $\eta \rightarrow 0$. Here, the goal is $q_{d, 3}$ from~\cref{as:gendata}, which is the entropy solution to the \emph{local} scalar conservation law
\[
\partial_t q(t, x) + \partial_x \big(V(q(t, x))q(t, x) \big) = 0, \quad q(0, \cdot) = q_0,
\]
where $q_0(x) \coloneqq \chi_{[0, 1]}(x)$ for $x \in \mathbb{R}$ at time $T = \tfrac{1}{2}$. The results can be seen in~\cref{fig:numsl}.
\begin{figure}[h!]
    \centering
    \includegraphics[clip,trim=0 22 3 0 0]{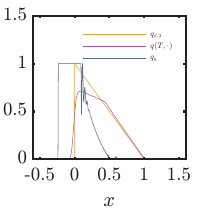}
    \includegraphics[clip,trim=15 22 3 0]{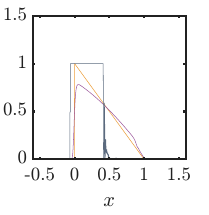}
    \includegraphics[clip,trim=15 22 3 0 0]{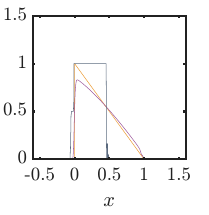}
    \includegraphics[clip,trim=15 22 3 0 0]{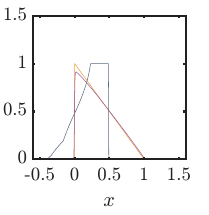}\\
    \includegraphics[clip,trim=0 24 3 0 0]{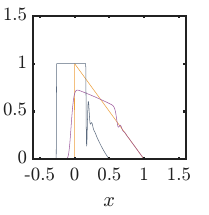}
    \includegraphics[clip,trim=15 24 3 0 0]{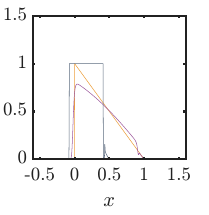}
    \includegraphics[clip,trim=15 24 3 0 0]{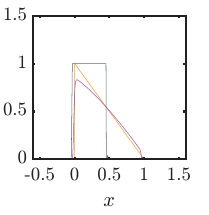}
    \includegraphics[clip,trim=15 24 3 0 0]{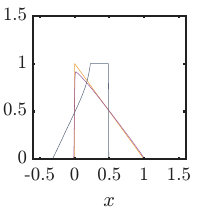}\\
    \includegraphics[clip,trim=0 23 3 0 0]{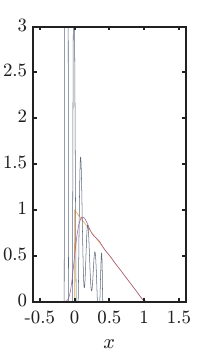}
    \includegraphics[clip,trim=15 23 3 0 0]{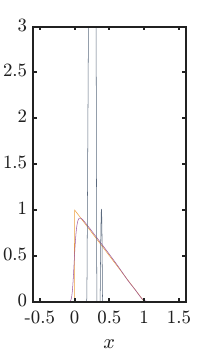}
    \includegraphics[clip,trim=15 23 3 0 0]{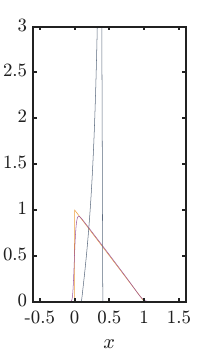}
    \includegraphics[clip,trim=15 23 3 0 0]{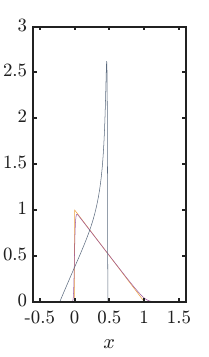}\\
    \includegraphics[clip,trim=0 0 3 0 0]{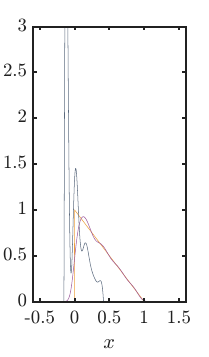}
    \includegraphics[clip,trim=15 0 3 0 0]{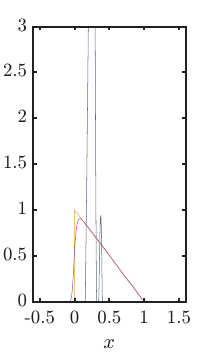}
    \includegraphics[clip,trim=15 0 3 0 0]{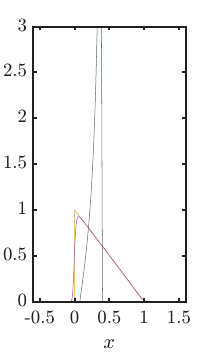}
    \includegraphics[clip,trim=15 0 3 0 0]{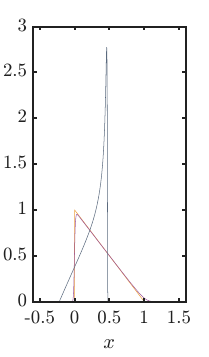}

    \caption{Final time tracking of $q_{d, 3}$ in the sense of~\cref{def:optd} for $q_{\max} = 1$ and $J_{\Delta, q}$ as in~\cref{eq:Jq} (first row), $q_{\max} = 1$ and $J_{\Delta, W}$ as in~\cref{eq:JW} (second row), $q_{\max} = 4$ and $J_{\Delta, q}$ (third row) and $q_{\max} = 4$ and $J_{\Delta W}$(bottom row). Moreover, we set $\eta \in \lbrace 0.01, 0.05, 0., 0.5 \rbrace$ (each row from left to right).}
    \label{fig:numsl}
\end{figure}

We can not always observe ``numerical convergence'' of the $q_0$. as $\eta \rightarrow 0$. This depends, as mentioned in the prior subsection, on the conditioning of the problem. When $q_{\max} = 4$, one can see better, how the solution structurally morphs into what can be observed for $\eta  = 0.01$. Moreover, we can see that the choice of $J_{\Delta, W}$ usually leads to less oscillations in the optimal solution found by the solver. For $q_{\max} = 4$, the goal is hit particularly well and one can see that decreasing $\eta$ indeed pushes $q(T, \cdot)$ closer to $q_{d, 3}$.\\
Moreover, the solution found is not the initial datum that was chosen to obtain $q_{d, 3}$. This again points to the aforementioned noninjectivity of the control to state map of the local problem. 

%\subsection{Initial data identification}
%Enddatum - Anfangsdaten führen zu Enddatum. Gibt es Enddatum, für welches Menge der Anfangsdaten charakterisiert ist?\\
%Welches Anfangsdatum kommt raus? Numerisch? \\
%Lokal Resultat hinschreiben. Was kommt numerisch raus?

\section{Open problems}\label{sec:open_problems}
%Having obtained general results on optimal control of nonlocal conservation laws, new challenges will be whether the nonlocal approximation keeps is differentiability in a more suitable space than what holds for optimal control problems subject to local conservation laws and to derive optimality conditions. Both could be established directly on the solution \(q_{\eta}\) but also on the nonlocal operator \(W_{\eta}[q_{\eta}]\) with suitable nonlocal kernels.

Currently, the singular limit convergence of the optimal control problem requires some compactness estimates, which rely on uniform \(TV\) estimates on the admissible set.
However, one could work with the nonlocal version of Ole\u{\i}nik estimates for the nonlocal operator, as was established in \cite{keimeroleinik}. The advantage would be that no \(TV\) bound on the admissible set of the initial datum would be required, but the nonlocal operator would satisfy a one-sided Lipschitz (OSL) condition in an arbitrarily small time, resulting in compactness. The problem with this approach is that for the OSL condition to hold, the nonlinear \(V\) and the initial datum would need to be restricted further, so unlike classical Ole\u{\i}nik estimates for scalar local conservation laws \cite{oleinik}, this approach might not be general enough.

In a recent preprint \cite{coclite2025singular}, it was proven via compensated compactness that the nonlocal solution converges to the local entropy solution without requiring the additional \(TV\) bound on the initial datum. This could strengthen the results obtained in this contribution, additionally yielding a convergence order of the nonlocal-to-local limit under these weaker assumptions, similar to what has been established in \cite[Theorem~1.3]{Colombo2023}.

At the moment, we rely for the numerical work on an adapted Lax--Friedrich method (compare \cref{sec:examples_numerics}). However, other methods, particularly those that are robust when changing the nonlocal reach \(\eta\) and letting it pass to zero \cite{Denitti2025asymptotically}, are important to consider in further studies.

Eventually, it might be worth knowing about the speed of convergence with respect to the nonlocal optimizers. A key result for convergence speed to the local entropy solution without the optimization framework is \cite{Colombo2023}; however, this is not enough to tell us anything about the speed of convergence with respect to the optimization.

\section*{Acknowledgments}
The research was funded by the Deutsche Forschungsgemeinschaft
(DFG, German Research Foundation) under Project-ID 416229255 (CRC
1411) and Project-ID 547096773.

\bibliographystyle{siamplain}
\bibliography{obstacle}

\end{document}